\pdfoutput=1

\documentclass{colt2016} 


\title[First-order Methods for Geodesically Convex Optimization]{First-order Methods for Geodesically Convex Optimization}
\usepackage{times}
\newcommand{\Exp}{\mathrm{Exp}}
\usepackage{amssymb}
\usepackage{amsmath}
\newcommand{\Mc}{\mathcal{M}}

\newcommand{\reals}{\mathbb{R}}


%




 \coltauthor{\Name{Hongyi Zhang} \Email{hongyiz@mit.edu}\\
 \addr Massachusetts Institute of Technology, Cambridge, MA 02139
 \AND
 \Name{Suvrit Sra} \Email{suvrit@mit.edu}\\
 \addr Massachusetts Institute of Technology, Cambridge, MA 02139
 }

\begin{document}

\maketitle

\begin{abstract}
	Geodesic convexity generalizes the notion of (vector space) convexity to nonlinear metric spaces. But unlike convex optimization, geodesically convex (g-convex) optimization is much less developed. In this paper we contribute to the understanding of g-convex optimization by developing iteration complexity analysis for several first-order algorithms on Hadamard manifolds. Specifically, we prove upper bounds for the global complexity of deterministic and stochastic (sub)gradient methods for optimizing smooth and nonsmooth g-convex functions, both with and without strong g-convexity. Our analysis also reveals how the manifold geometry, especially \emph{sectional curvature}, impacts convergence rates. To the best of our knowledge, our work is the first to provide global complexity analysis for first-order algorithms for general g-convex optimization. 


\end{abstract}

\begin{keywords}
first-order methods; geodesic convexity; manifold optimization; nonpositively curved spaces; iteration complexity
\end{keywords}

\section{Introduction}
Convex optimization is fundamental to numerous areas including machine learning. Convexity often helps guarantee polynomial runtimes and enables robust, more stable numerical methods. But almost invariably, the use of convexity in machine learning is limited to vector spaces, even though convexity \emph{per se} is not limited to vector spaces. Most notably, it generalizes to \emph{geodesically convex} metric spaces~\citep{gromov1978manifolds,bridson1999metric,burago2001course}, through which it offers a much richer setting for developing mathematical models amenable to global optimization.



Our broader aim is to increase awareness about g-convexity (see Definition \ref{def:g-cvx}); while our specific focus in this paper is on contributing to the understanding of geodesically convex (g-convex) optimization. In particular, we study first-order algorithms for smooth and nonsmooth g-convex optimization, for which we prove iteration complexity upper bounds. Except for a fundamental lemma that applies to general g-convex metric spaces, we limit our discussion to Hadamard manifolds (Riemannian manifolds with global nonpositive curvature), as they offer the most convenient grounds for generalization while also being relevant to numerous applications (see e.g.,~Section~\ref{sec:motiv}).


Specifically, we study optimization problems of the form
\begin{equation}
  \label{eq:main}
  \min\quad f(x)\qquad \text{such that}\quad x \in \Mc,
\end{equation}
where $f:\Mc \to \reals\cup\{\infty\}$ is a proper g-convex function and $\Mc$ is a Hadamard manifold \citep{bishop1969manifolds,gromov1978manifolds}. We solve~\eqref{eq:main} via first-order methods under a variety of settings analogous to the Euclidean case: nonsmooth, Lipschitz-smooth, and strongly g-convex. We present results for both deterministic and stochastic (where $f(x) = \mathbb{E}[F(x,\xi)]$) g-convex optimization. 

Although Riemannian geometry provides tools that enable generalization of Euclidean algorithms~\citep{udriste1994convex,absil2009optimization}, to obtain iteration complexity bounds we must overcome some fundamental geometric hurdles. We introduce key results that overcome some of these hurdles, and pave the way to analyzing first-order g-convex optimization algorithms.


\subsection{Related work and motivating examples}
\label{sec:motiv}
We recollect below a few items of related work and some examples relevant to machine learning,  where g-convexity and more generally Riemannian optimization play an important role.

Standard references on Riemannian optimization are \citep{udriste1994convex,absil2009optimization}, who primarily consider   problems on manifolds without necessarily having access to g-convexity. Consequently, their analysis is limited to asymptotic convergence (except for ~\cite[Theorem 4.2,][]{udriste1994convex} that proves linear convergence for  functions with positive-definite and bounded Riemannian Hessians). The recent monograph \citep{bacak2014convex} is devoted to g-convexity and g-convex optimization on geodesic metric spaces, though without any attention to global complexity analysis. \citet{bacak2014convex} also details a noteworthy application: averaging trees in the geodesic metric space of phylogenetic trees~\citep{billera2001geometry}.

At a more familiar level, implicitly the topic of ``geometric programming''~\citep{boyd2007tutorial} may be viewed as a special case of g-convex optimization~\citep{sra2015conic}. For instance, computing stationary states of Markov chains (e.g., while computing PageRank) may be viewed as  g-convex optimization problems by placing suitable geometry on the positive orthant; this idea has a fascinating extension to nonlinear iterations on convex cones (in Banach spaces) endowed with the structure of a geodesic metric space~\citep{lemmens2012nonlinear}. 

Perhaps the most important example of such metric spaces is the set of positive definite matrices viewed as a Riemannian or Finsler manifold; a careful study of this setup was undertaken by~\citet{sra2015conic}. They also highlighted applications to maximum likelihood estimation for certain non-Gaussian (heavy- or light-tailed) distributions, resulting in various g-convex and nonconvex likelihood problems; see also \citep{wiesel2012geodesic,zhang2013multivariate}. However, none of these three works presents a global convergence rate analysis for their algorithms.

There exist several nonconvex problems where Riemannian optimization has proved quite useful, e.g., low-rank matrix and tensor factorization~\citep{vandereycken2013low,ishteva2011best,mishra2013low}; dictionary learning~\citep{sun2015complete,harandi2012sparse}; optimization under orthogonality constraints \citep{edelman1998geometry,moakher2002means,shen2009fast,liu2015maximization};  and Gaussian mixture models~\citep{hoSr15b}, for which g-convexity helps  accelerate manifold optimization to greatly outperform the Expectation Maximization (EM) algorithm.

\subsection{Contributions}
We summarize the main contributions of this paper below.

\begin{list}{--}{\leftmargin=1em}
\setlength{\itemsep}{0pt}
\item We develop a new inequality (Lemma \ref{th:distance_bound}) useful for analyzing the behavior of optimization algorithms for functions in Alexandrov space with curvature bounded below, which can be applied to (not necessarily g-convex) optimization problems on Riemannian manifolds and beyond.
\item For g-convex optimization problems on Hadamard manifold (Riemannian manifold with nonpositive sectional curvature), we prove iteration complexity upper bounds for several existing algorithms (Table \ref{tb:results}). For the special case of smooth geodesically strongly convex optimization, a prior linear convergence result that uses line-search is known~\citep{udriste1994convex}; our results do not require line search. Moreover, as far as we are aware, ours are the first global complexity results for general g-convex optimization.
\end{list}





{ 
	\begin{table}[th]\small 
		\begin{center}
			\begin{tabular}{ccccccc}
				$f$ & Algorithm & Stepsize & Rate\footnotemark[2]& Averaging\footnotemark[3] & Theorem \vspace{4pt} \\
				\hline \hline
				\parbox[c]{1.8cm}{g-convex, \\Lipschitz} & \parbox[c]{1.8cm}{subgradient} & \parbox[c]{2cm}{$$\frac{D}{L_f\sqrt{ct}}$$} & \parbox[c]{4cm}{$$O\left(\sqrt{\frac{c}{t}}\right)$$} & \parbox[c]{0.5cm}{Yes} &  \parbox[c]{0.2cm}{\ref{th:cvx_rate}} \\
				\hline
				\parbox[c]{1.8cm}{g-convex, \\bounded \\subgradient} & \parbox[c]{1.8cm}{stochastic \\subgradient} & \parbox[c]{2cm}{$$\frac{D}{G\sqrt{ct}}$$} & \parbox[c]{4cm}{$$O\left(\sqrt{\frac{c}{t}}\right)$$} & \parbox[c]{0.5cm}{Yes} &  \parbox[c]{0.2cm}{\ref{th:stocvx_rate}} \\
				\hline
				\parbox[c]{1.8cm}{g-strongly \\convex, \\Lipschitz} & \parbox[c]{1.8cm}{subgradient} & \parbox[c]{2cm}{$$\frac{2}{\mu(s+1)}$$} & \parbox[c]{4cm}{$$O\left(\frac{c}{t}\right)$$} & \parbox[c]{0.5cm}{Yes} &  \parbox[c]{0.2cm}{\ref{th:stronglycvx_rate}} \\
				\hline
				\parbox[c]{1.8cm}{g-strongly \\convex, \\bounded \\subgradient} & \parbox[c]{1.8cm}{stochastic \\subgradient} & \parbox[c]{2cm}{$$\frac{2}{\mu(s+1)}$$} & \parbox[c]{4cm}{$$O\left(\frac{c}{t}\right)$$} & \parbox[c]{0.5cm}{Yes} &  \parbox[c]{0.2cm}{\ref{th:stostronglycvx_rate}} \\
				\hline
				\parbox[c]{1.8cm}{g-convex, \\smooth} & \parbox[c]{1.8cm}{gradient} & \parbox[c]{2cm}{$$\frac{1}{L_g}$$} & \parbox[c]{4cm}{$$O\left(\frac{c}{c+t}\right)$$} & \parbox[c]{0.5cm}{No} &  \parbox[c]{0.2cm}{\ref{th:smoothcvx_rate}} \\
				\hline
				\parbox[c]{1.8cm}{g-convex, \\smooth \\bounded \\variance} &\parbox[c]{1.8cm}{stochastic \\gradient} & \parbox[c]{2cm}{$$\frac{1}{L_g+\frac{\sigma}{D}\sqrt{ct}}$$} & \parbox[c]{4cm}{$$O\left(\frac{c+\sqrt{ct}}{c+t}\right)$$} & \parbox[c]{0.5cm}{Yes} &  \parbox[c]{0.2cm}{\ref{th:stosmoothcvx_rate}} \\
				\hline
				\parbox[c]{1.8cm}{g-strongly \\convex, \\smooth} & \parbox[c]{1.8cm}{gradient} & \parbox[c]{2cm}{$$\frac{1}{L_g}$$} & \parbox[c]{4cm}{$$O\left(\left(1-\min\left\{\frac{1}{c},\frac{\mu}{L_g}\right\}\right)^{t}\right)$$} & \parbox[c]{0.5cm}{No} &  \parbox[c]{0.2cm}{\ref{th:smoothstronglycvx_rate}}
			\end{tabular}
		\end{center}
		\caption{{\bf Summary of results.} This table summarizes the non-asymptotic convergence rates we have proved for various geodesically convex optimization algorithms. $s$: iterate index; $t$: total number of iterates; $D$: diameter of domain; $L_f$: Lipschitz constant of $f$; $c$: a constant dependent on $D$ and on the sectional curvature lower bound $\kappa$; $G$: upper bound of gradient norms; $\mu$: strong convexity constant of $f$; $L_g$: Lipschitz constant of the gradient; $\sigma$: square root variance of the gradient. } \label{tb:results}
	\end{table}
}
\footnotetext[2]{Here for simplicity only the dependencies on $c$ and $t$ are shown, while other factors are considered constant and thus omitted. Please refer to the theorems for complete results.}
\footnotetext[3]{``Yes'': result holds for proper averaging of the iterates; ``No'': result holds for the last iterate. Please refer to the theorems for complete results.}

\section{Background}
Before we describe the algorithms and analyze their properties, we would like to introduce some concepts in metric geometry and Riemannian geometry that generalize  concepts in Euclidean space. 
\subsection{Metric Geometry}
    For generalization of nonlinear optimization methods to metric space, we now recall some basic concepts in metric geometry, which cover vector spaces and Riemannian manifolds as special cases. A \emph{metric space} is a pair $(X,d)$ of set $X$ and distance function $d$ that satisfies positivity, symmetry, and the triangle inequality~\citep{burago2001course}. A continuous mapping from the interval $[0,1]$ to $X$ is called a \emph{path}. The \emph{length} of a path $\gamma : [0,1] \to X$ is defined as
    $ \mathrm{length(\gamma)} := \mathrm{sup} \sum_{i=1}^n d(\gamma(t_{i-1}), \gamma(t_i)),$
    where the supremum is taken over the set of all partitions $0=t_0<\cdots<t_n=1$ of the interval $[0,1]$, with an arbitrary $n\in \mathbb{N}$. A metric space is a \emph{length space} if for any $x,y\in X$ and $\epsilon >0$ there exists a path $\gamma: [0,1]\to X$ joining $x$ and $y$ such that $\mathrm{length}(\gamma) \leq d(x,y) + \epsilon$. A path $\gamma :[0,1]\to X$ is called a \emph{geodesic} if it is parametrized by the arc length. If every two points $x,y\in X$ are connected by a geodesic, we say $(X,d)$ is a \emph{geodesic space}. If the geodesic connecting every $x,y\in X$ is unique, the space is called \emph{uniquely geodesic}~\citep{bacak2014convex}.
    
    The properties of \emph{geodesic triangles} will be central to our analysis of optimization algorithms. A geodesic triangle $\triangle pqr$ with vertices $p,q,r\in X$ consists of three geodesics $\overline{pq} , \overline{qr}, \overline{rp}$. Given $\triangle pqr \in X$, a \emph{comparison triangle} $\triangle \bar p \bar q \bar r$ in $k$-plane is a corresponding triangle with the same side lengths in two-dimensional space of constant Gaussian curvature $k$. A length space with curvature bound is called an Alexandrov space. In particular, we have the following important definition:
    \begin{definition}[Alexandrov space with curvature $\ge k$] \label{def:alexandrov}
    	Let $k$ be a real number. A length space $X$ is a space of curvature $\ge k$ if every point $x\in X$ has a neighborhood $U$ such that for any triangle $\triangle abc$ contained in $U$ and any point $d\in\overline{ac}$ the inequality $|bd|\ge|\bar b\bar d|$ holds, where $\triangle \bar a\bar b\bar c$ is a comparison triangle in the $k$-plane and $\bar d\in\overline{\bar a\bar c}$ is the point such that $|\bar a\bar d| = |ad|$.
	\end{definition}
    The notion of angle is defined in the following sense. Let $\gamma: [0,1]\to X$ and $\eta: [0,1]\to X$ be two geodesics in $(X,d)$ with $\gamma_0 = \eta_0$, we define the \emph{angle} between $\gamma$ and $\eta$ as $ \alpha(\gamma, \eta) := \lim\sup_{s,t\to 0_+} \measuredangle \bar \gamma_s \bar \gamma_0 \bar \eta_t $
    where $\measuredangle \bar \gamma_s \bar \gamma_0 \bar \eta_t$ is the angle at $\bar \gamma_0$ of the corresponding triangle $\triangle \bar \gamma_s \bar \gamma_0 \bar \eta_t$. We use Toponogov's theorem to relate the angles and lengths of any geodesic triangle in a geodesic space to those of a comparison triangle in a space of constant curvature \citep{burago1992ad,burago2001course}. 
    
\subsection{Riemannian Geometry}

\begin{figure}[hbt]
	\centering \def\svgwidth{150pt}
	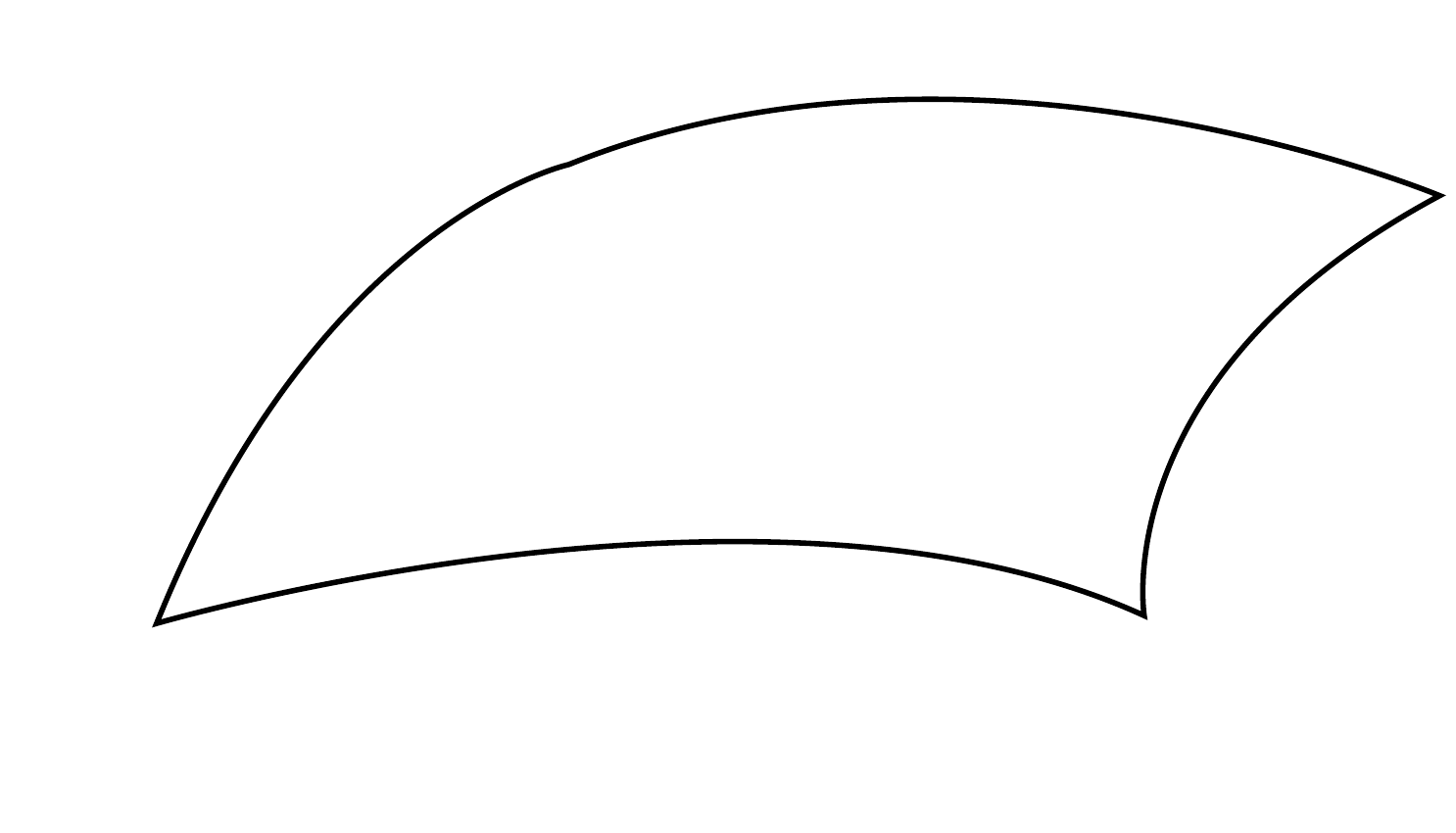
	\caption{Illustration of a manifold. Also shown are tangent space, geodesic and exponential map.}
\end{figure}

   An $n$-dimensional \emph{manifold} is a topological space where each point has a neighborhood that is homeomorphic to the $n$-dimensional Euclidean space. At any point $x$ on a manifold, tangent vectors are defined as the tangents of parametrized curves passing through $x$. The \emph{tangent space} $T_x\mathcal{M}$ of a manifold $\mathcal{M}$ at $x$ is defined as the set of all tangent vectors at the point $x$. An exponential map at $x \in \mathcal{M}$ is a mapping from the tangent space $T_x\mathcal{M}$ to $\mathcal{M}$ with the requirement that a vector $v \in T_x\mathcal{M}$ is mapped to the point $y:=\Exp_x(v)\in \mathcal{M}$ such that there exists a geodesic $\gamma : [0,1]\to \mathcal{M}$ satisfying $\gamma(0) = x, \gamma(1) = y$ and $\gamma'(0) = v$. 
   
   As tangent vectors at two different points $x, y\in \mathcal{M}$ lie in different tangent spaces, we cannot compare them directly. To meaningfully compare vectors in different tangent spaces, one needs to define a way to move a tangent vector along the geodesics, while `preserving' its length and orientation. We thus need to use an inner product structure on tangent spaces, which is called a Riemannian metric. A \emph{Riemannian manifold} $(\mathcal{M}, g)$ is a real smooth manifold equipped with an inner product $g_x$ on the tangent space $T_x\mathcal{M}$ of every point $x$, such that if $u,v$ are two vector fields on $\mathcal{M}$ then $x \mapsto \langle u, v \rangle_x := g_x(u,v)$ is a smooth function. On a Riemannian manifold, the notion of \emph{parallel transport} (parallel displacement) provides a sensible way to transport a vector along a geodesic. Intuitively, a tangent vector $v\in T_x\mathcal{M}$ at $x$ of a geodesic $\gamma$ is still a tangent vector $\Gamma(\gamma)_x^y v$ of $\gamma$ after being transported to a point $y$ along $\gamma$. Furthermore, parallel transport preserves inner products, i.e. $\langle u, v \rangle_x = \langle \Gamma(\gamma)_x^y u, \Gamma(\gamma)_x^y v \rangle_y$. 
   
    The curvature of a Riemannian manifold is characterized by its Riemannian metric tensor at each point. For worst-case analysis, it is sufficient to consider geodesic triangles of any two-dimensional subspace.  \emph{Sectional curvature} is the Gauss curvature of a two dimensional subspace of a Riemannian manifold, which characterizes the metric space property within that subspace. A subspace with positive, zero or negative sectional curvature is locally isometric to a two dimensional sphere, a Euclidean plane, or a hyperbolic plane with the same Gauss curvature.

\subsection{Function Classes on a Riemannian Manifold}
	We first define some key terms. Throughout the paper, we assume that  the function $f$ is defined on a Riemannian manifold $\mathcal{M}$, unless stated otherwise.
	
	\begin{definition}[Geodesic convexity] \label{def:g-cvx}
		A function $f:\mathcal{M}\to\mathbb{R}$ is said to be geodesically convex if for any $x,y\in\mathcal{M}$, a geodesic $\gamma$ such that $\gamma(0)=x$ and $\gamma(1)=y$, and $t\in [0,1]$, it holds that
		\[ f(\gamma(t)) \le (1-t)f(x) + tf(y). \]
	\end{definition}
	It can be shown that an equivalent definition is that for any $x,y\in\mathcal{M}$,
	\[ f(y) \ge f(x) + \langle g_x, \Exp_x^{-1}(y) \rangle_x, \]
	where $g_x$ is a subgradient of $f$ at $x$, or the gradient if $f$ is differentiable, and $\langle\cdot,\cdot\rangle_x$ denotes the inner product in the tangent space of $x$ induced by the Riemannian metric. In the rest of the paper we will omit the index of tangent space when it is clear from the context.
	\begin{definition}[Strong convexity]
		A function $f:\mathcal{M}\to\mathbb{R}$ is said to be geodesically $\mu$-strongly convex if for any $x,y\in\mathcal{M}$,
		\[ f(y) \ge f(x) + \langle g_x, \Exp_x^{-1}(y) \rangle_x + \frac{\mu}{2}d^2(x,y).\]
	\end{definition}
	\begin{definition}[Lipschitzness] 
		A function $f:\mathcal{M}\to\mathbb{R}$ is said to be geodesically $L_f$-Lipschitz if for any $x,y\in\mathcal{M}$,
		\[ |f(x)-f(y)| \le L_f d(x,y). \]
	\end{definition}
	\begin{definition}[Smoothness]
		A differentiable function $f:\mathcal{M}\to\mathbb{R}$ is said to be geodesically $L_g$-smooth if its gradient is $L_g$-Lipschitz, i.e. for any $x,y\in\mathcal{M}$,
		\[ \|g_x - \Gamma_y^x g_y\| \le L_g d(x,y) \]
	where $\Gamma_y^x$ is the parallel transport from $y$ to $x$.
	\end{definition}
        Observe that compared to the Euclidean setup, the above definition requires a parallel transport operation to ``transport'' $g_y$ to $g_x$. It can be proved that if $f$ is $L_g$-smooth, then for any $x,y\in\mathcal{M}$,
	\[ f(y) \le f(x) + \langle g_x, \Exp_x^{-1}(y) \rangle_x + \frac{L_g}{2}d^2(x,y). \]

\section{Convergence Rates of First-order Methods} 

General subgradient / gradient algorithms on Riemannian manifolds take the form
\begin{equation}
  x_{s+1} = \Exp_{x_s}(-\eta_s g_s),
\end{equation}
where $s$ is the iterate index, $g_s$ is a subgradient of the objective function, and $\eta_s$ is a step-size. For brevity, we will use the word `gradient' to refer to both subgradient and gradient, deterministic or stochastic; the meaning should be apparent from the context. 

While it is easy to translate first-order optimization algorithms from Euclidean space to Riemannian manifolds, and similarly to prove asymptotic convergence rates (since locally Riemannian manifolds resemble Euclidean space), it is much harder to carry out \emph{non-asymptotic} analysis, at least due to the following two difficulties:
 \begin{itemize}
	 \item {\bf Non-Euclidean trigonometry is difficult to use.} Trigonometric geometry in nonlinear spaces is fundamentally different from Euclidean space. In particular, for analyzing optimization algorithms, the law of cosines in Euclidean space 
		 \begin{equation} \label{eq:euclidean_loc}
			 a^2 = b^2 + c^2 - 2bc\cos(A),
		 \end{equation} 
	 where $a,b,c$ are the sides of a Euclidean triangle with $A$ the angle between sides $b$ and $c$, is an essential tool for bounding the squared distance between the iterates and the minimizer(s). Indeed, consider the Euclidean update $x_{s+1} = x_s - \eta_s g_s$. Applying (\ref{eq:euclidean_loc}) to the triangle $\triangle x_s x x_{s+1}$, with $a=\overline{xx_{s+1}}$, $b=\overline{x_sx_{s+1}}$, $c=\overline{xx_s}$, and $A=\measuredangle x x_s x_{s+1}$, we get the frequently used formula
		 \[ \|x_{s+1}-x\|^2 = \|x_s-x\|^2 - 2\eta_s\langle g_s, x_s-x \rangle + \eta_s^2\|g_s\|^2 \]
	 However, this nice equality does not exist for nonlinear spaces.
	 \item {\bf Linearization does not work.} Another key technique used in bounding squared distances is inspired by the proximal algorithms. Here, gradient-like updates are seen as proximal steps for minimizing a series of \emph{linearizations} of the objective function. Specifically, let $\psi(x;x_s) = f(x_s) + \langle g_s, x-x_s \rangle$ be the linearization of the convex function $f$, and let $g_s \in \partial f(x_s)$. Then, $x_{s+1} = x_s - \eta_s g_s$ is the unique solution to the following minimization problem
		 \[ \min_x\ \Bigl\{\psi(x;x_s) + \frac{1}{2\eta_s}\|x-x_s\|^2 \Bigr\}. \]
	 Since $\psi(x;x_s)$ is convex, we thus have (see e.g. \citet{tseng2009accelerated}) the recursively useful bound
		 \[ \psi(x_{s+1};x_s) + \frac{1}{2\eta_s}\|x_{s+1}-x\|^2 \le \psi(x;x_s) + \frac{1}{2\eta_s}\|x_s-x\|^2 - \frac{\eta_s}{2}\|g_s\|^2. \]
	 But in nonlinear space there is no trivial analogy of a linear function. For example, for any given $y\in\mathcal{M}$ and $g_y\in T_y\mathcal{M}$, the function 
	 \[ \psi(x;y) = f(y) + \langle g_y, \Exp_y^{-1}(x) \rangle, \] 
	 is geodesically both star-concave and star-convex in $y$, but neither convex nor concave in general. Thus a nonlinear analogue of the above result does not hold.
\end{itemize}

We address the first difficulty by developing an easy-to-use trigonometric distance bound for Alexandrov space with curvature bounded below. When specialized to Hadamard manifolds, our result reduces to the analysis in \citep{bonnabel2013stochastic}, which in turn relies on~ \citep[][Lemma 3.12]{cordero2001riemannian}. However, unlike \citep{cordero2001riemannian}, our proof assumes no manifold structure on the geodesic space of interest, and is fundamentally different in techniques.

 \subsection{Trigonometric Distance Bound}
As noted above, a main hurdle in analyzing non-asymptotic convergence of first-order methods in geodesic spaces is that the Euclidean law of cosines does not hold any more. For general nonlinear spaces, there are no corresponding analytical expressions. Even for the (hyperbolic) space of constant negative curvature $-1$, perhaps the simplest and most studied nonlinear space, the law of cosines is replaced by the \emph{hyperbolic law of cosines}:
\begin{equation}
  \label{eq:7}
  \cosh a = \cosh b \cosh c - \sinh b \sinh c \cos(A),
\end{equation}
which is not amendable to the standard techniques of convergence rate analysis. With the goal of developing analysis for nonlinear space optimization algorithms, our first contribution is the following trigonometric distance bound for Alexandrov space with curvature bounded below. Owing to its fundamental nature, we believe that this lemma may be of broader interest too.
\begin{lemma} \label{th:distance_bound}
	If $a,b,c$ are the sides (i.e., side lengths) of a geodesic triangle in an Alexandrov space with curvature lower bounded by $\kappa$, and $A$ is the angle between sides $b$ and $c$, then
	\begin{equation} \label{eq:distance_bound}
		a^2 \leq \frac{\sqrt{|\kappa|}c}{\tanh(\sqrt{|\kappa|}c)}b^2 + c^2 - 2bc\cos(A).
	\end{equation}
\end{lemma}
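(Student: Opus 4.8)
The plan is to reduce the stated inequality, which lives in a general Alexandrov space, to a one-dimensional inequality in the two-dimensional model space of constant curvature $\kappa$, and then to establish the latter by calculus. Since only the case relevant to Hadamard manifolds is needed, assume $\kappa<0$ and write $\lambda=\sqrt{|\kappa|}$ (the flat case $\kappa=0$ follows by continuity and recovers the Euclidean law of cosines with equality). First I would invoke the hinge form of Toponogov's theorem: in a space with curvature $\ge\kappa$, a geodesic hinge with sides $b,c$ and included angle $A$ has opposite side no longer than the opposite side $\bar a$ of the comparison hinge with the same $b$, $c$, $A$ in the $\kappa$-plane. This is the hinge counterpart of the angle comparison encoded in Definition~\ref{def:alexandrov} (the angles of a triangle dominate those of a comparison triangle with equal side lengths), and it yields $a\le\bar a$. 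It therefore suffices to prove $\bar a^2\le\frac{\lambda c}{\tanh(\lambda c)}b^2+c^2-2bc\cos A$ in the model space, where the scaled hyperbolic law of cosines $\cosh(\lambda\bar a)=\cosh(\lambda b)\cosh(\lambda c)-\sinh(\lambda b)\sinh(\lambda c)\cos A$ holds exactly.

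The claimed inequality is homogeneous of degree two under a common rescaling of all three lengths, so I may normalize $\lambda=1$, i.e.\ $\kappa=-1$. Fix $c$ and $A$, view $\bar a=\bar a(b)$ as a function of the remaining side $b\ge0$, and set $h(b)=\bar a(b)^2$ and $\phi(b)=\frac{c}{\tanh c}b^2+c^2-2bc\cos A$; the goal is $h(b)\le\phi(b)$ for all $b\ge0$. At $b=0$ the law of cosines gives $\bar a=c$, so $h(0)=c^2=\phi(0)$, and differentiating $\cosh\bar a=\cosh b\cosh c-\sinh b\sinh c\cos A$ at $b=0$ gives $\bar a'(0)=-\cos A$, hence $h'(0)=-2c\cos A=\phi'(0)$. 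Thus $h$ and $\phi$ agree to first order at the origin. Since $\phi''\equiv2c\coth c$ is constant, it suffices to prove the second-order domination $h''(b)\le2c\coth c$ for all $b\ge0$: then $\phi-h$ is convex with vanishing value and derivative at $0$, and is therefore nonnegative.

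To control $h''$ I would use that $\tfrac12 h(b)=\tfrac12 d(Q(b),R)^2$ is evaluated along a unit-speed geodesic $Q(b)$ emanating from the hinge vertex, with $R$ the fixed endpoint of the side of length $c$. In the model space the Hessian of $\tfrac12 d(\cdot,R)^2$ at distance $\rho$ from $R$ has eigenvalue $1$ radially and $\rho\coth\rho$ in the orthogonal directions; decomposing the velocity accordingly gives $\tfrac12 h''=\rho'^2+(1-\rho'^2)\rho\coth\rho$ with $\rho=\bar a(b)$. The crude bound $\tfrac12 h''\le\rho\coth\rho$ is \emph{insufficient}, since $\rho$ may exceed $c$ and then $\rho\coth\rho>c\coth c$. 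The resolution, and what I expect to be the main obstacle, is the first-variation identity $1-\rho'^2=\frac{\sinh^2 c\,\sin^2 A}{\sinh^2\rho}$, which shows that the velocity becomes increasingly radial precisely as $\rho$ grows. When $\rho\le c$ the bound is immediate, since $\tfrac12 h''$ is then a convex combination of $1$ and $\rho\coth\rho$, both at most $c\coth c$. When $\rho>c$, substituting the identity and taking the worst case $\sin^2 A=1$ reduces the claim $\tfrac12 h''\le c\coth c$ to the single-variable inequality
\[ 1-\frac{\sinh^2 c}{\sinh^2\rho}\ \ge\ \frac{\rho\coth\rho-c\coth c}{\rho\coth\rho-1},\qquad \rho>c, \]
and proving this last inequality—by clearing denominators and analyzing the resulting function of $\rho$ for fixed $c$, using the boundary behavior as $\rho\to c^+$ and $\rho\to\infty$—is where the essential difficulty lies.
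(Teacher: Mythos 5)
Your overall route shares its first two reductions with the paper --- Toponogov comparison (in hinge form) to pass to the constant-curvature model plane, then the rescaling by $\sqrt{|\kappa|}$ to normalize $\kappa=-1$ (the paper's Corollary~\ref{th:hyperbolic_distance_bound}) --- but the core one-dimensional argument is genuinely different. The paper never works with $a^2$ directly: it compares $\cosh a$ with $g(b,c,A)=\cosh\sqrt{\tfrac{c}{\tanh c}b^2+c^2-2bc\cos A}$, exploiting the exact identity $\tfrac{\partial^2}{\partial b^2}\cosh a=\cosh a$ from the law of cosines, proving $\tfrac{\partial^2}{\partial b^2}g\ge g$ (Lemma~\ref{th:lemma_super_exponential}), and concluding with a Gr\"onwall-type comparison lemma (Lemma~\ref{th:lemma_differential_inequality}). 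You instead compare $h(b)=\bar a(b)^2$ with $\phi(b)$ itself; since $\phi''\equiv 2c\coth c$ is constant, matching value and first derivative at $b=0$ reduces everything to the second-order bound $h''\le 2c\coth c$, which you attack geometrically via the Hessian of $\tfrac12 d^2(\cdot,R)$ in the hyperbolic plane together with the law-of-sines identity $1-\rho'^2=\sinh^2 c\,\sin^2 A/\sinh^2\rho$. Your version buys two things: the comparison step ($\phi-h$ convex, vanishing to first order at $0$, hence nonnegative) is trivial and needs no Gr\"onwall argument, and the constant $\zeta=c\coth c$ acquires a transparent meaning as the tangential eigenvalue of the Hessian of $\tfrac12 d^2(\cdot,R)$ at distance exactly $c$; your identity explains why the regime $\rho>c$ is harmless (the velocity turns radial as $\rho$ grows). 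You also correctly flagged that the naive bound $\tfrac12 h''\le\rho\coth\rho$ fails precisely there; that is indeed the crux.

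The step you leave open does hold, and closing it is elementary, so there is no real gap. After clearing the positive denominator $\rho\coth\rho-1$, your inequality for $\rho>c$ is exactly
\begin{equation*}
\frac{\rho\coth\rho-1}{\sinh^2\rho}\;\le\;\frac{c\coth c-1}{\sinh^2 c},
\end{equation*}
i.e.\ the assertion that $\psi(x)=\frac{x\cosh x-\sinh x}{\sinh^3 x}$ is nonincreasing on $(0,\infty)$. Differentiating, the sign of $\psi'(x)$ equals the sign of $x\sinh^2 x-3\cosh x\,(x\cosh x-\sinh x)=\tfrac32\sinh 2x-2x-x\cosh 2x$; setting $u=2x$ and $F(u)=u+\tfrac{u}{2}\cosh u-\tfrac32\sinh u$, one has $F(0)=F'(0)=0$ and $F''(u)=\tfrac12\cosh u\,\bigl(u-\tanh u\bigr)\ge 0$, hence $F\ge 0$ and $\psi'\le 0$. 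With that, your proof is complete, and its terminal inequality is if anything cleaner than the paper's Lemma~\ref{th:lemma_super_exponential}, which rests on a comparable elementary analysis of hyperbolic functions.
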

\begin{proof}\textbf{sketch.} The complete proof contains technical details that digress from the main focus of this paper, so we leave them in the appendix. Below we sketch the main steps.
	
	Our first observation is that by the famous Toponogov's theorem \citep{burago1992ad,burago2001course}, we can upper bound the side lengths of a geodesic triangle in an Alexandrov space with curvature bounded below by the side lengths of a comparison triangle in the hyperbolic plane, which satisfies (\emph{cf.}~\eqref{eq:7}):
		\begin{equation} \label{eq:hyperbolic_law_of_cosines}
			\cosh (\sqrt{|\kappa|}a) = \cosh (\sqrt{|\kappa|}b) \cosh (\sqrt{|\kappa|}c) - \sinh (\sqrt{|\kappa|}b) \sinh (\sqrt{|\kappa|}c) \cos(A).
		\end{equation} 
	Second, we observe that it suffices to study $\kappa = -1$, which corresponds to~\eqref{eq:7}, 
	since Eqn.~(\ref{eq:hyperbolic_law_of_cosines}) can be seen as  Eqn.~\eqref{eq:7} 
        with side lengths $a = \sqrt{|\kappa|}a', b = \sqrt{|\kappa|}b', c = \sqrt{|\kappa|}c'$ (see Lemma \ref{th:hyperbolic_distance_bound}).
	
	Finally, we observe that in~\eqref{eq:7},  
        $\frac{\partial^2}{\partial b^2} \cosh(a) = \cosh(a)$. Letting $g(b,c,A) := \cosh(\sqrt{\text{rhs}(b,c,A)})$, where $\text{rhs}(b,c,A)$ is the right hand side of \eqref{eq:distance_bound}, we then see that it is sufficient to prove the following:
	\begin{enumerate}
          \setlength{\itemsep}{0pt}
		\item $\cosh(a)$ and $g(b,c,A)$ are equal at $b=0$.
		\item the first partial derivatives of $\cosh(a)$ and $g(b,c,A)$ w.r.t.~$b$ agree at $b=0$.
		\item  $\frac{\partial^2}{\partial b^2} g(b,c,A) \ge g(b,c,A)$ for $b,c\ge 0$ (Lemma \ref{th:lemma_super_exponential}).
	\end{enumerate}
        These three steps, if true, lead to the proof of $\cosh(a)\le g(b,c,A)$ for $b,c\ge 0$, thus proving a special case of Lemma \ref{th:distance_bound} for space with constant sectional curvature $-1$ as shown in Lemma \ref{th:lemma_differential_inequality}, \ref{th:canonic_hyperbolic_distance_bound}. Combing this special case with our first two observations concludes the proof of the lemma. 
\end{proof}

\begin{remark}
  Inequality~(\ref{eq:distance_bound}) provides an upper bound on the side lengths of a geodesic triangle in an Alexandrov space with curvature bounded below. Some examples of such spaces are Riemannian manifolds, including hyperbolic space, Euclidean space, sphere, orthogonal groups, and compact sets on a PSD manifold. However, our derivation does not rely on any manifold structure, thus it also applies to certain cones and convex hypersurfaces \citep{burago2001course}.
\end{remark}

In the sequel, we use the notation $\zeta(\kappa,c)\triangleq \frac{\sqrt{|\kappa|}c}{\tanh(\sqrt{|\kappa|}c)}$ for the curvature dependent quantity from inequality~\eqref{eq:distance_bound}. From Lemma~\ref{th:distance_bound} it is straightforward to prove the following corollary, which characterizes an important relation between two consecutive updates of an iterative optimization algorithm on Riemannian manfiold with curvature bounded below.
\begin{corollary} \label{th:distance_corollary}
	For any Riemannian manifold $\mathcal{M}$ where the sectional curvature is lower bounded by $\kappa$ and any point $x$, $x_s\in\mathcal{M}$, the update $x_{s+1} = \Exp_{x_s}(-\eta_s g_s)$ satisfies
	\begin{equation} \label{eq:distance_corollary}
		\langle -g_s, \Exp_{x_s}^{-1}(x)\rangle \le \frac{1}{2\eta_s} \left(d^2(x_{s},x) - d^2(x_{s+1},x)\right) + \frac{\zeta(\kappa,d(x_s,x))\eta_s}{2} \|g_s\|^2.
	\end{equation}
\end{corollary}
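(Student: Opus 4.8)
The plan is to invoke Lemma~\ref{th:distance_bound} on the geodesic triangle $\triangle x_s x x_{s+1}$ whose vertices are the current iterate $x_s$, the reference point $x$, and the next iterate $x_{s+1}$. First, I note that a Riemannian manifold whose sectional curvature is bounded below by $\kappa$ is an Alexandrov space of curvature $\ge\kappa$, so the lemma is indeed applicable to any geodesic triangle on $\Mc$. I would then match the three vertices to the symbols in inequality~\eqref{eq:distance_bound} so that the side whose length we wish to control, $a = d(x_{s+1},x)$, is the side opposite the vertex $x_s$. With this choice the two sides meeting at $x_s$ are $b = d(x_s,x_{s+1})$ and $c = d(x_s,x)$, and $A$ is the angle between them at $x_s$.

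The next step is to translate the geometric quantities $b$, $c$, and $\cos(A)$ into the algebraic quantities on the right-hand side of~\eqref{eq:distance_corollary}. Since the update $x_{s+1} = \Exp_{x_s}(-\eta_s g_s)$ traces the geodesic leaving $x_s$ with initial velocity $-\eta_s g_s$, and geodesics are parametrized by arc length, we have $b = \eta_s\|g_s\|$, hence $b^2 = \eta_s^2\|g_s\|^2$; likewise $c = d(x_s,x) = \|\Exp_{x_s}^{-1}(x)\|$. For the angle I would use the definition of $A$ as the angle between the initial tangent vectors of the two geodesics leaving $x_s$, namely $-\eta_s g_s$ (toward $x_{s+1}$) and $\Exp_{x_s}^{-1}(x)$ (toward $x$). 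Expanding $\cos(A)$ as the normalized inner product of these vectors and multiplying by $bc$ yields the clean identity
\[ bc\cos(A) = \eta_s \langle -g_s, \Exp_{x_s}^{-1}(x)\rangle, \]
where the factors $\eta_s$, $\|g_s\|$, and $c$ cancel appropriately.

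Substituting $a^2 = d^2(x_{s+1},x)$, $c^2 = d^2(x_s,x)$, $b^2 = \eta_s^2\|g_s\|^2$, and the curvature coefficient $\zeta(\kappa,c) = \zeta(\kappa,d(x_s,x))$ (which is exactly the factor $\tfrac{\sqrt{|\kappa|}c}{\tanh(\sqrt{|\kappa|}c)}$ appearing in~\eqref{eq:distance_bound}) into the lemma gives
\[ d^2(x_{s+1},x) \le \zeta(\kappa,d(x_s,x))\,\eta_s^2\|g_s\|^2 + d^2(x_s,x) - 2\eta_s\langle -g_s, \Exp_{x_s}^{-1}(x)\rangle. \]
Isolating the inner-product term and dividing through by $2\eta_s > 0$ then produces precisely~\eqref{eq:distance_corollary}.

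Once the triangle is set up correctly the remainder is essentially bookkeeping, so I expect no serious analytic obstacle; the delicate point is purely the identification of the angle $A$. I must confirm that the $A$ in Lemma~\ref{th:distance_bound} is the Alexandrov/Toponogov angle at $x_s$ between the two emanating geodesics, and that its cosine coincides with the normalized tangent-space inner product $\langle -g_s, \Exp_{x_s}^{-1}(x)\rangle/(\|g_s\|\,c)$. Establishing this correspondence relies on the fact that the Riemannian metric induces the very length structure used in the comparison-triangle definition of angle, so that the first-order behavior of the two geodesics at $x_s$ is governed by the tangent-space inner product. Verifying this compatibility is the one step I would treat with care.
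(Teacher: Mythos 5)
Your proposal is correct and follows essentially the same route as the paper's proof: apply Lemma~\ref{th:distance_bound} to the geodesic triangle $\triangle x_s x_{s+1} x$ with $a = d(x_{s+1},x)$, $b = d(x_s,x_{s+1}) = \eta_s\|g_s\|$, $c = d(x_s,x)$, and the angle $A$ at $x_s$ satisfying $bc\cos(A) = \langle -\eta_s g_s, \Exp_{x_s}^{-1}(x)\rangle$, then rearrange. Your additional care about why the comparison-triangle (Alexandrov) angle agrees with the Riemannian tangent-space angle is a valid point that the paper leaves implicit, but it is a standard fact and does not change the argument.
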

\begin{proof}
  Simply notice that for the geodesic triangle $\triangle x_s x_{s+1} x$, we have $d(x_s,x_{s+1}) = \eta_s\|g_s\|$, while  $d(x_s,x_{s+1})d(x_s,x)\cos(\measuredangle x_{s+1}x_s x) = \langle -\eta_sg_s,\Exp_{x_s}^{-1}(x) \rangle$. Now $a = \overline{x_{s+1}x}, b = \overline{x_{s+1}x_s}, c = \overline{x_sx}, A = \measuredangle x_{s+1}x_s x$, apply Lemma \ref{th:distance_bound} and simplify to obtain~\eqref{eq:distance_corollary}.
\end{proof}

It is instructive to compare \eqref{eq:distance_corollary} with its Euclidean counterpart (for which actually $\zeta=1$):
	\[ \langle -g_s, x-x_s \rangle = \tfrac{1}{2\eta_s} \left(\|x_{s}-x\|^2 - \|x_{s+1}-x\|^2\right) + \tfrac{\eta_s}{2} \|g_s\|^2. \]
Corollary \ref{th:distance_corollary} furnishes the missing tool for analyzing non-asymptotic convergence rates of manifold optimization algorithms. We now move to the analysis of several such first-order algorithms.
\subsection{Convergence Rate Analysis}
\paragraph{Nonsmooth convex optimization.} The following two theorems show that both deterministic and stochastic subgradient methods achieve a \emph{curvature-dependent} $O(1/\sqrt{t})$ rate of convergence for g-convex on Hadamard manifolds.
\begin{theorem} \label{th:cvx_rate}
  Let $f$ be g-convex and $L_f$-Lipschitz, the diameter of domain be bounded by $D$, and the sectional curvature lower-bounded by $\kappa\le 0$. Then, the subgradient method with a constant stepsize $\eta_s = \eta = \frac{D}{L_f\sqrt{\zeta(\kappa,D)t}}$ and $\overline{x}_1 = x_1$,$\overline{x}_{s+1} = \Exp_{\overline{x}_s}\left(\frac{1}{s+1}\Exp_{\overline{x}_s}^{-1}(x_{s+1})\right)$  satisfies
	\[ f\left(\overline{x}_{t} \right) - f(x^*) \leq DL_f\sqrt{\frac{\zeta(\kappa,D)}{t}}. \]
\end{theorem}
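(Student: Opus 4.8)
The plan is to combine the subgradient characterization of geodesic convexity with Corollary~\ref{th:distance_corollary} to obtain a per-step bound on the suboptimality gap, then telescope and average. First I would invoke the equivalent (subgradient) form of Definition~\ref{def:g-cvx}: setting $x=x_s$ and $y=x^*$ gives $f(x^*)\ge f(x_s)+\langle g_s,\Exp_{x_s}^{-1}(x^*)\rangle$, i.e. $f(x_s)-f(x^*)\le \langle -g_s,\Exp_{x_s}^{-1}(x^*)\rangle$. I would then apply Corollary~\ref{th:distance_corollary} with $x=x^*$ to upper bound the right-hand side by $\tfrac{1}{2\eta}\bigl(d^2(x_s,x^*)-d^2(x_{s+1},x^*)\bigr) + \tfrac{\zeta(\kappa,d(x_s,x^*))\eta}{2}\|g_s\|^2$, which is the Riemannian surrogate for the Euclidean identity displayed just before the corollary.

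Next I would control the gradient and curvature factors by their worst-case values. Since $f$ is $L_f$-Lipschitz, testing the subgradient inequality along the unit-speed geodesic in direction $g_s/\|g_s\|$ shows $\|g_s\|\le L_f$. Since $d(x_s,x^*)\le D$ and $\zeta(\kappa,c)=\tfrac{\sqrt{|\kappa|}c}{\tanh(\sqrt{|\kappa|}c)}=\sqrt{|\kappa|}c\coth(\sqrt{|\kappa|}c)$ is nondecreasing in $c$ (its derivative has numerator $\tfrac12\sinh(2\sqrt{|\kappa|}c)-\sqrt{|\kappa|}c\ge 0$), we get $\zeta(\kappa,d(x_s,x^*))\le \zeta(\kappa,D)$. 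Substituting the constant stepsize $\eta$ and summing over $s=1,\dots,t$, the distance terms telescope to $d^2(x_1,x^*)-d^2(x_{t+1},x^*)\le D^2$, yielding $\sum_{s=1}^t\bigl(f(x_s)-f(x^*)\bigr)\le \tfrac{D^2}{2\eta} + \tfrac{\zeta(\kappa,D)\eta L_f^2 t}{2}$.

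The remaining, genuinely manifold-specific step is to pass from this running average of function values to the single averaged iterate $\overline{x}_t$. I would prove by induction that $f(\overline{x}_t)\le \tfrac{1}{t}\sum_{s=1}^t f(x_s)$: the recursion places $\overline{x}_{s+1}$ at parameter $\tfrac{1}{s+1}$ along the geodesic from $\overline{x}_s$ to $x_{s+1}$, so Definition~\ref{def:g-cvx} gives $f(\overline{x}_{s+1})\le \tfrac{s}{s+1}f(\overline{x}_s)+\tfrac{1}{s+1}f(x_{s+1})$, and the inductive hypothesis $f(\overline{x}_s)\le\tfrac1s\sum_{i=1}^s f(x_i)$ closes the step. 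Dividing the telescoped sum by $t$ and plugging in $\eta=\tfrac{D}{L_f\sqrt{\zeta(\kappa,D)t}}$ then balances the two terms, each becoming $\tfrac{1}{2}DL_f\sqrt{\zeta(\kappa,D)/t}$, which sum to the claimed bound.

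I expect the main subtlety to be the averaging step rather than any individual estimate: unlike the Euclidean case one cannot take a plain convex combination of the iterates, and the correctness of the weighted geodesic mean hinges on the weights $\tfrac{1}{s+1}$ being exactly those that make the induction telescope against the geodesic-convexity inequality. By contrast, Corollary~\ref{th:distance_corollary} cleanly absorbs all the nonlinear trigonometry, so the rest of the argument reduces to bookkeeping fully analogous to the Euclidean subgradient analysis, with the geometry entering only through the monotone curvature factor $\zeta(\kappa,D)$.
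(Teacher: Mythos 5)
Your proposal is correct and follows essentially the same route as the paper's proof: the g-convexity subgradient inequality combined with Corollary~\ref{th:distance_corollary}, telescoping under the constant stepsize, and the geodesic averaging induction $f(\overline{x}_{s+1})\le \tfrac{s}{s+1}f(\overline{x}_s)+\tfrac{1}{s+1}f(x_{s+1})$. In fact you supply details the paper leaves implicit --- the bound $\|g_s\|\le L_f$ from Lipschitzness and the monotonicity of $\zeta(\kappa,\cdot)$ needed to replace $\zeta(\kappa,d(x_s,x^*))$ by $\zeta(\kappa,D)$ --- both of which are verified correctly.
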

\begin{proof}
	Since $f$ is g-convex, it satisfies $f(x_{s}) - f(x^*) \le \langle-g_s, \Exp_{x_s}^{-1}(x^*),\rangle$ which combined with Corollary \ref{th:distance_corollary} and the $L_f$-Lipschitz condition yields the upper bound
	\begin{equation} \label{eq:subgrad_telescoping_element}
		f(x_{s}) - f(x^*) \le \frac{1}{2\eta} \left(d^2(x_{s},x^*) - d^2(x_{s+1},x^*)\right) + \frac{\zeta(\kappa,D)L_f^2\eta}{2}.
	\end{equation}
	Summing over $s$ from $1$ to $t$ and dividing by $t$, we obtain
		\begin{equation} \label{eq:subgrad_telescoping_sum}
			\frac{1}{t}\sum_{s=1}^t f(x_{s}) - f(x^*) \leq \frac{1}{2t\eta}\left( d^2(x_1,x^*) - d^2(x_{t+1},x^*) \right) + \frac{\zeta(\kappa,D)L_f^2\eta}{2}.
		\end{equation}
       Plugging in $d(x_1,x^*)\leq D$ and $\eta = \frac{D}{L_f\sqrt{\zeta(\kappa,D)t}}$ we further obtain
		\[ \frac{1}{t}\sum_{s=1}^t f(x_{s}) - f(x^*) \leq DL_f\sqrt{\frac{\zeta(\kappa,D)}{t}}. \]
	It remains to show that $f(\overline{x}_t) \le \frac{1}{t}\sum_{s=1}^t f(x_{s})$, which can be proved by an easy induction.
\end{proof}

We note that Theorem~\ref{th:cvx_rate} and our following results are all generalizations of known results in Euclidean space. Indeed, setting curvature $\kappa=0$ we can recover the Euclidean convergence rates (in some cases up to a difference in small constant factors). However, for Hadamard manifolds $\kappa<0$ and the theorem implies that the algorithms may converge more slowly. Also worth noting is that we must be careful in how we obtain the ``average'' iterate $\overline{x}_t$ on the manifold.

\begin{theorem} \label{th:stocvx_rate}
	If $f$ is g-convex, the diameter of the domain is bounded by $D$, the sectional curvature of the manifold is lower bounded by $\kappa\le 0$, and the stochastic subgradient oracle satisfies $\mathbb{E}[\widetilde{g}(x)] = g(x) \in\partial f(x), \mathbb{E}[\|\widetilde{g}_s\|^2] \le G^2$, then the stochastic subgradient method with stepsize $\eta_s = \eta = \frac{D}{G\sqrt{\zeta(\kappa,D)t}}$, and $\overline{x}_1 = x_1, \overline{x}_{s+1} = \Exp_{\overline{x}_s}\left(\frac{1}{s+1}\Exp_{\overline{x}_s}^{-1}(x_{s+1})\right)$  satisfies the upper bound
	\[ \mathbb{E}[f\left(\overline{x}_{t} \right) - f(x^*)] \leq DG\sqrt{\frac{\zeta(\kappa,D)}{t}}. \]
\end{theorem}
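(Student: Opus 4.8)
The plan is to mirror the argument of Theorem~\ref{th:cvx_rate}, inserting (conditional) expectations at the appropriate points so that the stochastic oracle's unbiasedness and second-moment bound can be exploited. First I would apply Corollary~\ref{th:distance_corollary} to the stochastic update $x_{s+1} = \Exp_{x_s}(-\eta\widetilde{g}_s)$, which yields, for each $s$,
\[ \langle -\widetilde{g}_s, \Exp_{x_s}^{-1}(x^*)\rangle \le \frac{1}{2\eta}\left(d^2(x_s,x^*) - d^2(x_{s+1},x^*)\right) + \frac{\zeta(\kappa,d(x_s,x^*))\eta}{2}\|\widetilde{g}_s\|^2. \]
Since the diameter is bounded by $D$ and $\zeta(\kappa,\cdot)$ is nondecreasing in its second argument for $\kappa\le 0$, I would replace $\zeta(\kappa,d(x_s,x^*))$ by the deterministic upper bound $\zeta(\kappa,D)$.

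Next I would take the expectation conditioned on the history $\mathcal{F}_s$ up to $x_s$. Using $\mathbb{E}[\widetilde{g}_s\mid\mathcal{F}_s] = g_s\in\partial f(x_s)$ together with g-convexity, the left-hand side becomes
\[ \mathbb{E}[\langle -\widetilde{g}_s, \Exp_{x_s}^{-1}(x^*)\rangle \mid \mathcal{F}_s] = \langle -g_s, \Exp_{x_s}^{-1}(x^*)\rangle \ge f(x_s) - f(x^*), \]
while $\mathbb{E}[\|\widetilde{g}_s\|^2\mid\mathcal{F}_s]\le G^2$ controls the last term. Applying the tower property and rearranging gives the per-step inequality
\[ \mathbb{E}[f(x_s)-f(x^*)] \le \frac{1}{2\eta}\,\mathbb{E}\left[d^2(x_s,x^*)-d^2(x_{s+1},x^*)\right] + \frac{\zeta(\kappa,D)G^2\eta}{2}, \]
which is the exact stochastic analogue of~\eqref{eq:subgrad_telescoping_element}.

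From here the argument is routine: summing over $s=1,\dots,t$, dividing by $t$, and telescoping the distance terms yields a bound matching~\eqref{eq:subgrad_telescoping_sum} in expectation; plugging in $d(x_1,x^*)\le D$ and the prescribed stepsize $\eta = D/(G\sqrt{\zeta(\kappa,D)t})$ balances the two terms and gives $\frac{1}{t}\sum_{s=1}^t \mathbb{E}[f(x_s)-f(x^*)] \le DG\sqrt{\zeta(\kappa,D)/t}$. Finally, the same easy induction used in Theorem~\ref{th:cvx_rate} shows $f(\overline{x}_t)\le \frac{1}{t}\sum_{s=1}^t f(x_s)$ for the geodesic running average, so taking expectations transfers the averaged bound to the single iterate $\overline{x}_t$.

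I expect the only genuinely delicate point to be the bookkeeping with conditional expectations: I must ensure that $\zeta(\kappa,d(x_s,x^*))$ is bounded by the constant $\zeta(\kappa,D)$ \emph{before} taking expectations (so that it factors out of the second-moment bound), and that the unbiasedness $\mathbb{E}[\widetilde{g}_s\mid\mathcal{F}_s]=g_s$ is invoked only on the inner-product term, where the deterministic vector $\Exp_{x_s}^{-1}(x^*)$ is $\mathcal{F}_s$-measurable and hence may be pulled out of the conditional expectation. Everything else is a direct stochastic lift of the deterministic proof.
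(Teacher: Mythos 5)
Your proposal is correct and follows exactly the route the paper takes: the paper's own proof simply says the argument mirrors Theorem~\ref{th:cvx_rate} with expectations inserted, applies g-convexity with the unbiased oracle, combines with Corollary~\ref{th:distance_corollary} and the second-moment bound to get the expectation analogue of~\eqref{eq:subgrad_telescoping_element}, and then telescopes and averages as before. Your treatment is in fact slightly more careful than the paper's terse version (conditioning on $\mathcal{F}_s$ and using the tower property, rather than the paper's loose $\langle -\mathbb{E}[\widetilde{g}_s], \Exp_{x_s}^{-1}(x^*)\rangle$ notation), but it is the same proof.
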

\begin{proof}
	The proof structure is very similar, except that for each equation we take expectation with respect to the sequence $\{x_s\}_{s=1}^t$. Since $f$ is g-convex, we have
	\begin{eqnarray*}
		\mathbb{E}[f(x_{s}) - f(x^*)] \le \langle-\mathbb{E}[\widetilde{g}_s], \Exp_{x_s}^{-1}(x^*)\rangle,
	\end{eqnarray*}
	which combined with Corollary \ref{th:distance_corollary} and $\mathbb{E}[\|\widetilde{g}_s\|^2] \le G^2$ yields
	\begin{equation} \label{eq:stosubgrad_telescoping_element}
		\mathbb{E}[f(x_{s}) - f(x^*)] \le \frac{1}{2\eta} \mathbb{E}\left[d^2(x_{s},x^*) - d^2(x_{s+1},x^*)\right] + \frac{\zeta(\kappa,D)G^2\eta}{2}.
	\end{equation}
        Now arguing as in Theorem~\ref{th:cvx_rate} the proof follows.
\end{proof}

\paragraph{Strongly convex nonsmooth functions.} The following two theorems show that both subgradient method and stochastic subgradient method achieve a curvature dependent $O(1/t)$ rate of convergence for g-strongly convex functions on Hadamard manifolds. 
\begin{theorem} \label{th:stronglycvx_rate}
	If $f$ is geodesically $\mu$-strongly convex and $L_f$-Lipschitz, and the sectional curvature of the manifold is lower bounded by $\kappa\le 0$, then the subgradient method with $\eta_s = \frac{2}{\mu(s+1)}$  satisfies
	\[ f\left(\overline{x}_t \right) - f(x^*) \le \frac{2\zeta(\kappa,D)L_f^2}{\mu(t+1)},\]
	 where $\overline{x}_1 = x_1$, and $\overline{x}_{s+1} = \Exp_{\overline{x}_s}\left(\frac{2}{s+1}\Exp_{\overline{x}_s}^{-1}(x_{s+1})\right)$.
\end{theorem}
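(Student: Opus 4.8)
The plan is to mirror the template of the proof of Theorem~\ref{th:cvx_rate}, but to exploit strong convexity in order to upgrade the rate from $O(1/\sqrt{t})$ to $O(1/t)$. First I would write the defining inequality of $\mu$-strong convexity evaluated at the minimizer $x^*$, namely $f(x^*)\ge f(x_s)+\langle g_s,\Exp_{x_s}^{-1}(x^*)\rangle+\tfrac{\mu}{2}d^2(x_s,x^*)$, and rearrange it into
\[ f(x_s)-f(x^*)\le \langle -g_s,\Exp_{x_s}^{-1}(x^*)\rangle-\tfrac{\mu}{2}d^2(x_s,x^*). \]
Applying Corollary~\ref{th:distance_corollary} to the triangle $\triangle x_s x_{s+1} x^*$, together with the Lipschitz bound $\|g_s\|\le L_f$ and the monotonicity of $c\mapsto\zeta(\kappa,c)$ (so that $\zeta(\kappa,d(x_s,x^*))\le\zeta(\kappa,D)$ whenever $d(x_s,x^*)\le D$), then yields the per-step inequality
\[ f(x_s)-f(x^*)\le \tfrac{1}{2\eta_s}\bigl(d^2(x_s,x^*)-d^2(x_{s+1},x^*)\bigr)+\tfrac{\zeta(\kappa,D)L_f^2\eta_s}{2}-\tfrac{\mu}{2}d^2(x_s,x^*). \]

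The decisive step is substituting the stepsize $\eta_s=\tfrac{2}{\mu(s+1)}$, so that $\tfrac{1}{2\eta_s}=\tfrac{\mu(s+1)}{4}$. The extra strong-convexity term $-\tfrac{\mu}{2}d^2(x_s,x^*)$ is exactly what allows the now \emph{growing} distance coefficient to collapse: the coefficient of $d^2(x_s,x^*)$ becomes $\tfrac{\mu(s+1)}{4}-\tfrac{\mu}{2}=\tfrac{\mu(s-1)}{4}$, leaving
\[ f(x_s)-f(x^*)\le \tfrac{\mu(s-1)}{4}d^2(x_s,x^*)-\tfrac{\mu(s+1)}{4}d^2(x_{s+1},x^*)+\tfrac{\zeta(\kappa,D)L_f^2}{\mu(s+1)}. \]
I would then multiply through by the weight $s$, turning the distance terms into $\tfrac{\mu s(s-1)}{4}d^2(x_s,x^*)-\tfrac{\mu s(s+1)}{4}d^2(x_{s+1},x^*)$, which telescopes cleanly across $s=1,\dots,t$: the boundary term at $s=1$ vanishes and the terminal term is nonnegative and hence may be dropped. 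Since $\tfrac{s}{s+1}\le 1$, the residual Lipschitz contributions sum to at most $t$, giving $\sum_{s=1}^t s\,(f(x_s)-f(x^*))\le \tfrac{t\,\zeta(\kappa,D)L_f^2}{\mu}$.

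Finally I would pass from this weighted sum of function values to the averaged iterate. The averaging recursion $\overline{x}_{s+1}=\Exp_{\overline{x}_s}\bigl(\tfrac{2}{s+1}\Exp_{\overline{x}_s}^{-1}(x_{s+1})\bigr)$ is designed so that $\overline{x}_t$ is a geodesic weighted average of $x_1,\dots,x_t$ with weights growing linearly in $s$; an easy induction (exactly as in Theorem~\ref{th:cvx_rate}) verifies this and, via the weighted Jensen inequality implied by g-convexity, yields $f(\overline{x}_t)-f(x^*)\le \tfrac{2}{t(t+1)}\sum_{s=1}^t s\,(f(x_s)-f(x^*))$. Dividing the bound of the previous paragraph by the total weight $\tfrac{t(t+1)}{2}$ then produces $f(\overline{x}_t)-f(x^*)\le \tfrac{2\zeta(\kappa,D)L_f^2}{\mu(t+1)}$, as claimed.

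The main obstacle I anticipate is the telescoping bookkeeping: choosing the weight $s$ so that the $s$-dependent $1/\eta_s$ coefficient is \emph{precisely} absorbed by the strong-convexity penalty, and confirming that the stated geodesic averaging implements exactly the weights used in the telescoping — a matching that is trivial in Euclidean space but here requires the g-convex (weighted) Jensen step. By contrast, the curvature is handled cheaply through the monotonicity of $\zeta$, so the only genuinely nonlinear ingredient, Corollary~\ref{th:distance_corollary}, is reused verbatim from the nonsmooth convex analysis.
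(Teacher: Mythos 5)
Your proof is correct and essentially identical to the paper's own: the same strong-convexity inequality combined with Corollary~\ref{th:distance_corollary}, the same substitution $\eta_s=\frac{2}{\mu(s+1)}$ producing the coefficients $\frac{\mu(s-1)}{4}$ and $\frac{\mu(s+1)}{4}$, the same multiplication by the weight $s$ with telescoping (boundary term vanishing at $s=1$, terminal term dropped), and the same Jensen-type induction for the geodesic average. One caveat you share with the paper rather than introduce: the stated recursion $\overline{x}_{s+1}=\Exp_{\overline{x}_s}\left(\frac{2}{s+1}\Exp_{\overline{x}_s}^{-1}(x_{s+1})\right)$ actually produces weights proportional to $s-1$ (note $\overline{x}_2=x_2$ exactly), whereas the telescoping needs weights proportional to $s$, which would require the coefficient $\frac{2}{s+2}$ — an off-by-one in the averaging scheme that affects the paper's proof in the same way.
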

\begin{proof}
  Since $f$ is geodesically $\mu$-strongly convex, we have
	\begin{eqnarray*}
		f(x_{s}) - f(x^*) \le \langle-g_s, \Exp_{x_s}^{-1}(x^*)\rangle - \frac{\mu}{2}d^2(x_s,x^*),
	\end{eqnarray*}
	which combined with Corollary \ref{th:distance_corollary} and $L_f$-Lipschitz condition yields
	\begin{align}
		\label{eq:stronglycvx_remark1} f(x_{s}) - f(x^*) &\le \left(\frac{1}{2\eta_s}-\frac{\mu}{2}\right) d^2(x_{s},x^*) - \frac{1}{2\eta_s}d^2(x_{s+1},x^*) + \frac{\zeta(\kappa,D)L_f^2\eta_s}{2} \\
			\label{eq:mu_telescoping_element}& = \frac{\mu(s-1)}{4}d^2(x_{s},x^*) - \frac{\mu(s+1)}{4}d^2(x_{s+1},x^*) + \frac{\zeta(\kappa,D)L_f^2}{\mu(s+1)}.
	\end{align}
	Multiply (\ref{eq:mu_telescoping_element}) by $s$ and sum over $s$ from $1$ to $t$; then divide the result by $\frac{t(t+1)}{2}$ to obtain
	\begin{align}
		\frac{2}{t(t+1)}\sum_{s=1}^t sf(x_{s}) - f(x^*) \le \frac{2\zeta(\kappa,D)L_f^2}{\mu(t+1)}.
	\end{align}
	The final step is to show $f(\overline{x}_t) \le \frac{2}{t(t+1)}\sum_{s=1}^t sf(x_{s})$, which again follows by an easy induction.
\end{proof}

\begin{theorem} \label{th:stostronglycvx_rate}
	If $f$ is geodesically $\mu$-strongly convex, the sectional curvature of the manifold is lower bounded by $\kappa\le 0$, and the stochastic subgradient oracle satisfies $\mathbb{E}[\widetilde{g}(x)] = g(x) \in\partial f(x), \mathbb{E}[\|\widetilde{g}_s\|^2] \le G^2$, then the subgradient method with $\eta_s = \frac{2}{\mu(s+1)}$  satisfies
	\[ \mathbb{E}[f\left(\overline{x}_t \right) - f(x^*)] \le \frac{2\zeta(\kappa,D)G^2}{\mu(t+1)} \]
	where $\overline{x}_1 = x_1$, and $\overline{x}_{s+1} = \Exp_{\overline{x}_s}\left(\frac{2}{s+1}\Exp_{\overline{x}_s}^{-1}(x_{s+1})\right)$.
\end{theorem}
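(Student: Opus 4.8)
The plan is to replay the argument of Theorem~\ref{th:stronglycvx_rate} under the expectation operator, exactly as Theorem~\ref{th:stocvx_rate} adapts Theorem~\ref{th:cvx_rate}. The only genuinely new ingredients are bookkeeping of the oracle's randomness and the curvature factor.

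First I would condition on the history of iterates that determines $x_s$. Since $\Exp_{x_s}^{-1}(x^*)$ is then fixed and the oracle is unbiased, $\mathbb{E}[\widetilde{g}_s] = g_s \in \partial f(x_s)$, the tower property gives $\mathbb{E}[\langle -\widetilde{g}_s, \Exp_{x_s}^{-1}(x^*)\rangle] = \mathbb{E}[\langle -g_s, \Exp_{x_s}^{-1}(x^*)\rangle]$. Combining this with $\mu$-strong convexity yields
\[ \mathbb{E}[f(x_s) - f(x^*)] \le \mathbb{E}[\langle -\widetilde{g}_s, \Exp_{x_s}^{-1}(x^*)\rangle] - \frac{\mu}{2}\mathbb{E}[d^2(x_s,x^*)]. \]
Next I would apply Corollary~\ref{th:distance_corollary} to the stochastic update $x_{s+1} = \Exp_{x_s}(-\eta_s\widetilde{g}_s)$ for each realization. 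Because $\kappa \le 0$ makes $\zeta(\kappa,\cdot)$ nondecreasing in its distance argument and the domain has diameter $D$, the random factor obeys $\zeta(\kappa,d(x_s,x^*)) \le \zeta(\kappa,D)$ pointwise, so $\zeta(\kappa,d(x_s,x^*))\|\widetilde{g}_s\|^2 \le \zeta(\kappa,D)\|\widetilde{g}_s\|^2$; taking expectations and using $\mathbb{E}[\|\widetilde{g}_s\|^2] \le G^2$ produces the stochastic analogue of \eqref{eq:stronglycvx_remark1}. Substituting $\eta_s = \frac{2}{\mu(s+1)}$ converts it into the telescoping form of \eqref{eq:mu_telescoping_element}, with $G^2$ in place of $L_f^2$ and $\mathbb{E}[\cdot]$ throughout.

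Then I would multiply by $s$, sum from $1$ to $t$, and divide by $\frac{t(t+1)}{2}$. The quadratic terms telescope because the negative term $-\frac{\mu s(s+1)}{4}\mathbb{E}[d^2(x_{s+1},x^*)]$ at index $s$ cancels the positive term arising at index $s+1$, while the $s=1$ boundary term vanishes (its coefficient is $\frac{\mu\cdot 1\cdot 0}{4}=0$), and the residual noise terms satisfy $\sum_{s=1}^t \frac{s}{s+1}\cdot\frac{\zeta(\kappa,D)G^2}{\mu} \le \frac{t\zeta(\kappa,D)G^2}{\mu}$. This gives
\[ \frac{2}{t(t+1)}\sum_{s=1}^t s\,\mathbb{E}[f(x_s)] - f(x^*) \le \frac{2\zeta(\kappa,D)G^2}{\mu(t+1)}, \]
after which the same weighted-averaging induction as in Theorem~\ref{th:stronglycvx_rate} shows $f(\overline{x}_t) \le \frac{2}{t(t+1)}\sum_{s=1}^t s f(x_s)$ by g-convexity and closes the bound in expectation.

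The main obstacle is a matter of care rather than depth: one must correctly invoke conditioning so that unbiasedness converts $\widetilde{g}_s$ into a genuine subgradient inside the inner product, and must justify replacing the random curvature factor $\zeta(\kappa,d(x_s,x^*))$ by $\zeta(\kappa,D)$ via monotonicity of $\zeta$ in its distance argument together with the diameter bound. Everything else is the deterministic computation of Theorem~\ref{th:stronglycvx_rate} carried out under $\mathbb{E}[\cdot]$.
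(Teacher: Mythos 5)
Your proposal is correct and is exactly the argument the paper intends: its own proof of Theorem~\ref{th:stostronglycvx_rate} merely says to repeat Theorem~\ref{th:stronglycvx_rate} under expectations and omits the details, which you have filled in faithfully (conditioning plus the tower property to handle the unbiased oracle, pointwise application of Corollary~\ref{th:distance_corollary} with $\zeta(\kappa,d(x_s,x^*))\le\zeta(\kappa,D)$, the same $s$-weighted telescoping, and the same averaging induction). Your version is in fact slightly more careful than the paper's own exposition, e.g.\ in making the conditioning explicit rather than writing the expectation inside the inner product as in Theorem~\ref{th:stocvx_rate}.
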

\begin{proof}
	The proof structure is very similar to the previous theorem, except that now we take expectations over the sequence $\{x_s\}_{s=1}^t$. We omit the details for brevity. 
\end{proof}

Theorems~\ref{th:stronglycvx_rate} and \ref{th:stostronglycvx_rate} are generalizations of their Euclidean counterparts \citep{lacoste2012simpler}, and follow the same proof structures. Our upper bounds depend linearly on $\zeta(\kappa,D)$, which implies that with $\kappa<0$ the algorithms may converge more slowly. However, note that for strongly convex problems, the distances from iterates to the minimizer are shrinking, thus the inequality (\ref{eq:stronglycvx_remark1}) (or its stochastic version)  may be too pessimistic, and better dependency on $\kappa$ may be obtained with a more refined analysis. We leave this as an open problem for the future.

\paragraph{Smooth convex optimization.} The following two theorems show that gradient descent algorithm achieves a curvature dependent $O(1/t)$ rate of convergence, whereas stochastic gradient achieves a curvature dependent $O(1/t+\sqrt{1/t})$ rate for smooth g-convex functions on Hadamard manifolds. 
\begin{theorem} \label{th:smoothcvx_rate}
	If $f:\mathcal{M}\to \mathbb{R}$ is g-convex with an $L_g$-Lipschitz gradient, the diameter of domain is bounded by $D$, and the sectional curvature of the manifold is bounded below by $\kappa$, then gradient descent with $\eta_s = \eta = \frac{1}{L_g}$ satisfies for $t>1$ the upper bound
	\[ f(x_t) - f(x^*) \leq \frac{\zeta(\kappa,D)L_g D^2}{2(\zeta(\kappa,D)+t-2)}. \]
\end{theorem}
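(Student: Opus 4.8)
The plan is to run the same two-ingredient scheme that underlies the Euclidean analysis of smooth convex gradient descent, routed through Corollary~\ref{th:distance_corollary}. Write $\Delta_s := f(x_s)-f(x^*)$ and $d_s := d(x_s,x^*)$, and abbreviate $\zeta := \zeta(\kappa,D)$. The first ingredient is a \emph{sufficient-decrease} estimate: applying the $L_g$-smoothness inequality to the pair $x_s, x_{s+1}=\Exp_{x_s}(-\eta g_s)$ and using $\Exp_{x_s}^{-1}(x_{s+1})=-\eta g_s$ and $d(x_s,x_{s+1})=\eta\|g_s\|$ with $\eta=1/L_g$ gives $\Delta_{s+1}\le \Delta_s-\tfrac{1}{2L_g}\|g_s\|^2$. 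In particular $\{\Delta_s\}$ is nonincreasing and $\tfrac{1}{2L_g}\|g_s\|^2\le \Delta_s-\Delta_{s+1}$. The second ingredient comes from g-convexity, which yields $\Delta_s\le \langle -g_s,\Exp_{x_s}^{-1}(x^*)\rangle$; feeding this into Corollary~\ref{th:distance_corollary} with $x=x^*$ and $\eta_s=1/L_g$, and replacing $\zeta(\kappa,d_s)$ by $\zeta$ using $d_s\le D$ and the monotonicity of $\zeta(\kappa,\cdot)$ (exactly as in the preceding proofs), gives $\Delta_s\le \tfrac{L_g}{2}(d_s^2-d_{s+1}^2)+\tfrac{\zeta}{2L_g}\|g_s\|^2$.

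Next I would eliminate the gradient-norm term by substituting $\tfrac{\zeta}{2L_g}\|g_s\|^2\le \zeta(\Delta_s-\Delta_{s+1})$ into the convexity estimate, obtaining $\Delta_s\le \tfrac{L_g}{2}(d_s^2-d_{s+1}^2)+\zeta(\Delta_s-\Delta_{s+1})$. The key manipulation --- the step that recovers the correct constant rather than one off by roughly a factor of two --- is to rearrange this so that the \emph{later} iterate is isolated:
\[
   \zeta\,\Delta_{s+1} \le (\zeta-1)\Delta_s + \tfrac{L_g}{2}\big(d_s^2-d_{s+1}^2\big).
\]
Here it is essential that $\zeta\ge 1$ on a Hadamard manifold, so the coefficient $\zeta-1$ is nonnegative and the inequality stays in a telescopable form.

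Finally I would sum the displayed inequality over $s=1,\dots,t-1$. The distance terms telescope to $\tfrac{L_g}{2}(d_1^2-d_t^2)\le \tfrac{L_g}{2}d_1^2$, while, writing $S:=\sum_{s=2}^{t}\Delta_s$, the two gap sums collapse to leave $S\le (\zeta-1)\Delta_1-(\zeta-1)\Delta_t+\tfrac{L_g}{2}d_1^2$. Using monotonicity in the form $S\ge (t-1)\Delta_t$ and collecting the $\Delta_t$ terms gives $(\zeta+t-2)\Delta_t\le (\zeta-1)\Delta_1+\tfrac{L_g}{2}d_1^2$. The last step, which I expect to be the crux, is to control the residual initial gap: $L_g$-smoothness at the minimizer (where the gradient vanishes) yields $\Delta_1\le \tfrac{L_g}{2}d_1^2$, and this is exactly what is needed to merge the two right-hand terms into $(\zeta-1)\tfrac{L_g}{2}d_1^2+\tfrac{L_g}{2}d_1^2=\zeta\,\tfrac{L_g}{2}d_1^2\le \tfrac{\zeta L_g D^2}{2}$. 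Dividing by $\zeta+t-2$ then gives the claim. The main obstacle is thus not any single hard inequality but the bookkeeping: summing $\Delta_s$ naively (instead of the rearranged $\Delta_{s+1}$ form) degrades the rate by a constant factor, and the curvature penalty $(\zeta-1)\Delta_s$ must be absorbed through the telescoping together with the tight initial-gap estimate, so that $\zeta$ enters linearly in the numerator and additively in the denominator precisely as stated.
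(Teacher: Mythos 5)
Your proposal is correct and follows essentially the same route as the paper's proof: the same sufficient-decrease and convexity-plus-Corollary~\ref{th:distance_corollary} ingredients, the same $\zeta$-weighted combination yielding $\zeta\Delta_{s+1}-(\zeta-1)\Delta_s\le \tfrac{L_g}{2}\bigl(d_s^2-d_{s+1}^2\bigr)$, and the same telescoping plus monotonicity and initial-gap bound $\Delta_1\le \tfrac{L_g}{2}d_1^2$. The only (welcome) refinement is that you justify the initial-gap bound via smoothness at the minimizer, where the paper simply asserts it.
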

\begin{proof}
	For simplicity we denote $\Delta_s = f(x_s) - f(x^*)$.
	First observe that with $\eta = \frac{1}{L_g}$ the algorithm is a descent method. Indeed, we have
	\begin{align} \label{eq:smooth_s+1_minus_s}
		\Delta_{s+1} - \Delta_s \leq \langle g_s, \Exp_{x_s}^{-1}(x_{s+1}) \rangle + \frac{L_g}{2} d^2(x_{s+1},x_s) = - \frac{\|g_s\|^2}{2L_g}.
	\end{align}
	On the other hand, by the convexity of $f$ and Corollary \ref{th:distance_corollary} we obtain
	\begin{align} \label{eq:smooth_s+1_minus_star}
		\Delta_s \le \langle -g_s, \Exp_{x_s}^{-1}(x^*)\rangle \le \frac{L_g}{2} \left(d^2(x_{s},x^*) - d^2(x_{s+1},x^*)\right) + \frac{\zeta(\kappa,D) \|g_s\|^2}{2L_g}.
	\end{align}
	Multiplying (\ref{eq:smooth_s+1_minus_s}) by $\zeta(\kappa,D)$ and adding to (\ref{eq:smooth_s+1_minus_star}), we get
	\begin{equation}
		\zeta(\kappa,D)\Delta_{s+1} - (\zeta(\kappa,D)-1)\Delta_s \le \frac{L_g}{2} \left(d^2(x_{s},x^*) - d^2(x_{s+1},x^*)\right).
	\end{equation}
	Now summing over $s$ from $1$ to $t-1$, a brief manipulation shows that
	\begin{equation}
		\zeta(\kappa,D)\Delta_t + \sum_{s=2}^{t-1}\Delta_s \le (\zeta(\kappa,D)-1)\Delta_1 + \tfrac{L_g D^2}{2}.
	\end{equation}
	Since for $s\le t$ we proved $\Delta_t \le \Delta_s$, and by assumption $\Delta_1 \le \frac{L_g D^2}{2}$, for $t>1$ we get
	\[ \Delta_t \le \frac{\zeta(\kappa,D)L_g D^2}{2(\zeta(\kappa,D)+t-2)}, \]
        yielding the desired bound.
\end{proof}

\begin{theorem} \label{th:stosmoothcvx_rate}
	If $f:\mathcal{M}\to \mathbb{R}$ is g-convex with $L_g$-Lipschitz gradient, the diameter of domain is bounded by $D$, the sectional curvature of the manifold is bounded below by $\kappa$, and the stochastic gradient oracle satisfies $\mathbb{E}[\widetilde{g}(x)] = g(x) = \nabla f(x), \mathbb{E}[\|\nabla f(x) - \widetilde{g}_s\|^2] \le \sigma^2$, then the stochastic gradient algorithm with $\eta_s = \eta = \frac{1}{L_g + 1/\alpha}$ where $\alpha=\frac{D}{\sigma}\sqrt{\frac{1}{\zeta(\kappa,D)t}}$ satisfies for  $t>1$
	\[ \mathbb{E}[f(\overline{x}_t) - f(x^*)] \leq \frac{\zeta(\kappa,D)L_g D^2 + 2D\sigma\sqrt{\zeta(\kappa,D)t}}{2(\zeta(\kappa,D)+t-2)}, \]
	where $\overline{x}_2 = x_2$, $\overline{x}_{s+1} = \Exp_{\overline{x}_s}\left(\frac{1}{s}\Exp_{\overline{x}_s}^{-1}(x_{s+1})\right)$ for $2\le s\le t-2$, $\overline{x}_t = \Exp_{\overline{x}_{t-1}}\left(\frac{\zeta(\kappa,D)}{\zeta(\kappa,D)+t-2}\Exp_{\overline{x}_{t-1}}^{-1}(x_t)\right)$.
\end{theorem}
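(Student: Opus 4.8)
The plan is to follow the skeleton of the deterministic smooth proof (Theorem~\ref{th:smoothcvx_rate}) while carrying the stochastic noise through conditional expectations. Write $\zeta := \zeta(\kappa,D)$, let $\mathbb{E}_s$ denote expectation conditioned on the history up to $x_s$, and set $\Delta_s := f(x_s) - f(x^*)$ and $\delta_s := \mathbb{E}[\Delta_s]$. The one structural difference from the deterministic argument is that the monotone descent property $\Delta_t \le \Delta_s$ no longer holds once the gradient is noisy; this is exactly why the weighted geodesic average $\overline{x}_t$ must stand in for the last iterate.

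First I would establish two per-step inequalities. Using $L_g$-smoothness together with $x_{s+1} = \Exp_{x_s}(-\eta\widetilde{g}_s)$, $\mathbb{E}_s[\widetilde{g}_s] = g_s$, and $\mathbb{E}_s[\|\widetilde{g}_s\|^2] \le \|g_s\|^2 + \sigma^2$, one obtains the inexact descent bound
\[ \mathbb{E}_s[\Delta_{s+1}] - \Delta_s \le -\eta\bigl(1 - \tfrac{L_g\eta}{2}\bigr)\|g_s\|^2 + \tfrac{L_g\eta^2\sigma^2}{2}. \]
Next, since $\Exp_{x_s}^{-1}(x^*)$ is measurable with respect to the history at step $s$, g-convexity gives $\Delta_s \le \langle -g_s,\Exp_{x_s}^{-1}(x^*)\rangle = \mathbb{E}_s[\langle -\widetilde{g}_s,\Exp_{x_s}^{-1}(x^*)\rangle]$, and applying Corollary~\ref{th:distance_corollary} to the realized update yields, after taking $\mathbb{E}_s$,
\[ \Delta_s \le \tfrac{1}{2\eta}\mathbb{E}_s\!\left[d^2(x_s,x^*) - d^2(x_{s+1},x^*)\right] + \tfrac{\zeta\eta}{2}\bigl(\|g_s\|^2 + \sigma^2\bigr). \]

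Then I would multiply the descent bound by $\zeta$ and add it to the convexity bound. The coefficient of $\|g_s\|^2$ becomes $\tfrac{\zeta\eta}{2}(L_g\eta - 1)$, which is nonpositive because $\eta = (L_g + 1/\alpha)^{-1} \le 1/L_g$, so the gradient-norm term can simply be dropped (the multiplier $\zeta$ is forced by the requirement that the telescope produce weight $\zeta$ on $\delta_t$ and weight $1$ on the middle iterates, matching the averaging weights). Taking total expectations, telescoping over $s=1,\dots,t-1$, and using $d(x_1,x^*)\le D$ gives
\[ \zeta\delta_t + \sum_{s=2}^{t-1}\delta_s \le (\zeta-1)\delta_1 + \tfrac{D^2}{2\eta} + (t-1)\,\tfrac{\zeta\eta\sigma^2}{2}(1 + L_g\eta). \]
I would bound $\delta_1 \le \tfrac{L_g D^2}{2}$ (smoothness at the optimum, where the gradient vanishes) and substitute $\tfrac{1}{\eta} = L_g + 1/\alpha$, so the first two terms combine to $\tfrac{\zeta L_g D^2}{2} + \tfrac{D^2}{2\alpha}$, and with $\alpha = \tfrac{D}{\sigma}\sqrt{1/(\zeta t)}$ the quantity $\tfrac{D^2}{2\alpha}$ equals $\tfrac12 D\sigma\sqrt{\zeta t}$.

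Finally, the weighted average is handled by an induction showing $f(\overline{x}_s) \le \tfrac{1}{s-1}\sum_{i=2}^{s} f(x_i)$ for $2\le s\le t-1$ followed by the special last step, so that $f(\overline{x}_t) - f(x^*) \le \tfrac{1}{\zeta+t-2}\bigl(\sum_{s=2}^{t-1}\Delta_s + \zeta\Delta_t\bigr)$, each inequality coming from g-convexity along the averaging geodesic; combining with the telescoped bound yields the claimed rate. The main obstacle I anticipate is forcing the variance term onto the stated constant rather than a loose one: the crude estimate $1 + L_g\eta \le 2$ loses a factor of two, so I would instead use $\eta(1+L_g\eta)\le\alpha$, which follows from $(L_g\alpha+1)^2 \ge 2L_g\alpha+1$, together with $\tfrac{t-1}{\sqrt{t}}\le \sqrt{t}$, to obtain exactly $(t-1)\,\tfrac{\zeta\eta\sigma^2}{2}(1+L_g\eta) \le \tfrac12 D\sigma\sqrt{\zeta t}$. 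Coordinating the step size, the multiplier $\zeta$, and the nonuniform final averaging weight $\tfrac{\zeta}{\zeta+t-2}$ so that everything lands on $\zeta L_g D^2 + 2D\sigma\sqrt{\zeta t}$ is the delicate bookkeeping at the heart of the argument.
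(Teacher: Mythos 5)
Your proof is correct and reaches the paper's bound exactly, and its architecture is the same as the paper's: the smoothness-based descent inequality, the g-convexity inequality combined with Corollary~\ref{th:distance_corollary}, the $\zeta(\kappa,D)$-weighted combination that telescopes to the Lyapunov quantity $\zeta\Delta_t+\sum_{s=2}^{t-1}\Delta_s$, the bound $\Delta_1\le\tfrac{L_gD^2}{2}$, and the induction handling the nonuniform geodesic averaging. The one place you genuinely diverge is the treatment of the noise. The paper keeps the cross term $\langle g_s-\widetilde{g}_s,\Exp_{x_s}^{-1}(x_{s+1})\rangle$ and applies Young's inequality with parameter $\alpha$, which, together with the choice $\eta=1/(L_g+1/\alpha)$, makes the $\mathbb{E}\|\widetilde{g}_s\|^2$ terms cancel \emph{exactly} when $\zeta$ times the descent inequality is added to the convexity inequality, leaving a clean per-step noise contribution of $\tfrac{\alpha\zeta\sigma^2}{2}$. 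You instead push the bias--variance decomposition $\mathbb{E}_s\|\widetilde{g}_s\|^2\le\|g_s\|^2+\sigma^2$ through both inequalities, drop the resulting nonpositive coefficient $\tfrac{\zeta\eta}{2}(L_g\eta-1)$ of $\|g_s\|^2$, and then need the extra algebraic step $\eta(1+L_g\eta)\le\alpha$ (your justification via $(L_g\alpha+1)^2\ge 2L_g\alpha+1$ is exactly right) to avoid losing a factor of two in the variance term and land on $\tfrac12 D\sigma\sqrt{\zeta(\kappa,D)\,t}$. Your route is the more standard Euclidean-SGD-style bookkeeping and makes explicit where the step-size condition $\eta\le 1/L_g$ is used; the paper's route engineers the cancellation into the step size from the start and so needs no final algebraic patch. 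Both yield the identical constant, and your handling of the conditional expectations and of the averaging induction (including the special last weight $\tfrac{\zeta(\kappa,D)}{\zeta(\kappa,D)+t-2}$) is sound.
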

\begin{proof}
  As before we write $\Delta_s = f(x_s) - f(x^*)$. First we observe that
	\begin{align} 
	\Delta_{s+1} - \Delta_s &\le \langle g_s, \Exp_{x_s}^{-1}(x_{s+1}) \rangle + \frac{L_g}{2} d^2(x_{s+1},x_s) \\
		&= \langle \widetilde{g}_s, \Exp_{x_s}^{-1}(x_{s+1}\rangle + \langle g_s - \widetilde{g}_s, \Exp_{x_s}^{-1}(x_{s+1}\rangle + \frac{L_g}{2} d^2(x_{s+1},x_s) \\
		&\le \langle \widetilde{g}_s, \Exp_{x_s}^{-1}(x_{s+1})\rangle + \frac{\alpha}{2} \|g_s - \widetilde{g}_s\|^2 + \frac{1}{2}\left(L_g+\frac{1}{\alpha}\right) d^2(x_{s+1},x_s)
	\end{align}
	Taking expectation, and letting $\eta = \frac{1}{L_g + 1/\alpha}$, we obtain
	\begin{align} \label{eq:stosmooth_s+1_minus_s}
	\mathbb{E}[\Delta_{s+1} - \Delta_s] \le  \frac{\alpha\sigma^2}{2} - \frac{\mathbb{E}[\|\widetilde{g}_s\|^2]}{2\left(L_g+\frac{1}{\alpha}\right)}.
	\end{align}
	On the other hand, using convexity of $f$ and Corollary \ref{th:distance_corollary} we get
	\begin{align} \label{eq:stosmooth_s+1_minus_star}
	\Delta_s \le \langle -g_s, \Exp_{x_s}^{-1}(x^*)\rangle \le \frac{L_g+\frac{1}{\alpha}}{2} \mathbb{E}\left[d^2(x_{s},x^*) - d^2(x_{s+1},x^*)\right] + \frac{\zeta(\kappa,D) \mathbb{E}[\|\widetilde{g}_s\|^2]}{2\left(L_g+\frac{1}{\alpha}\right)}.
	\end{align}
	Multiply (\ref{eq:stosmooth_s+1_minus_s}) by $\zeta(\kappa,D)$ and add to (\ref{eq:stosmooth_s+1_minus_star}), we get
	\begin{equation*}
		\mathbb{E}[\zeta(\kappa,D)\Delta_{s+1} - (\zeta(\kappa,D)-1)\Delta_s] \le \frac{L_g+\frac{1}{\alpha}}{2} \mathbb{E}\left[d^2(x_{s},x^*) - d^2(x_{s+1},x^*)\right] + \frac{\alpha \zeta(\kappa,D)\sigma^2}{2}.
	\end{equation*}
	Summing over $s$ from $1$ to $t-1$ and simplifying, we obtain
	\begin{equation}
		\mathbb{E}[\zeta(\kappa,D)\Delta_t + \sum_{s=2}^{t-1}\Delta_s] \le \mathbb{E}[(\zeta(\kappa,D)-1)\Delta_1] + \frac{L_g D^2}{2} + \frac{1}{2}\left(\frac{D^2}{\alpha} + \alpha \zeta(\kappa,D)t\sigma^2\right).
	\end{equation}
	Now set $\alpha = \frac{D}{\sigma\sqrt{\zeta(\kappa,D)t}}$, and note that $\Delta_1 \le \frac{L_g D^2}{2}$; thus, for $t>1$ we get
	\[ \mathbb{E}\bigl[\zeta(\kappa,D)\Delta_t + \sum\nolimits_{s=2}^{t-1}\Delta_s\bigr] \le \frac{\zeta(\kappa,D)L_g D^2}{2} + D\sigma\sqrt{\zeta(\kappa,D)t}.\]
	Finally, due to g-convexity of $f$ it is easy to verify by induction that 
		\begin{equation*}\mathbb{E}[f(\overline{x}_t)-f(x^*)] \le \frac{\mathbb{E}[\zeta(\kappa,D)\Delta_t + \sum_{s=2}^{t-1}\Delta_s]}{\zeta(\kappa,D)+t-2}.\end{equation*}%
\end{proof}

\paragraph{Smooth and strongly convex functions.} Next we prove that gradient descent achieves a curvature dependent linear rate of convergence for geodesically strongly convex and smooth functions on Hadamard manifolds.
\begin{theorem} \label{th:smoothstronglycvx_rate}
	If $f:\mathcal{M}\to \mathbb{R}$ is geodesically $\mu$-strongly convex with $L_g$-Lipschitz gradient, and the sectional curvature of the manifold is bounded below by $\kappa$, then the gradient descent algorithm with $\eta_s = \eta = \frac{1}{L_g}$, $\epsilon = \min\{\frac{1}{\zeta(\kappa,D)}, \frac{\mu}{L_g}\}$ satisfies for $t > 1$
	\[ f(x_t) - f(x^*) \leq \frac{(1-\epsilon)^{t-2}L_gD^2}{2}.\]
\end{theorem}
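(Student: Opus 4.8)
The plan is to mirror the structure of the smooth g-convex proof (Theorem~\ref{th:smoothcvx_rate}) but to exploit strong convexity so as to upgrade its telescoping sum into a geometric contraction. Write $\Delta_s = f(x_s)-f(x^*)$ and $D_s = d^2(x_s,x^*)$. First I would record the two one-step inequalities that already appear in the smooth and the strongly convex analyses. From $L_g$-smoothness and the update $x_{s+1}=\Exp_{x_s}(-\tfrac{1}{L_g}g_s)$, the descent estimate $\Delta_{s+1}-\Delta_s \le -\tfrac{1}{2L_g}\|g_s\|^2$ holds exactly as in~\eqref{eq:smooth_s+1_minus_s}. From $\mu$-strong convexity together with Corollary~\ref{th:distance_corollary} (with $\eta_s=1/L_g$) I obtain the progress estimate
\[ \Delta_s \le \tfrac{L_g}{2}\bigl(D_s-D_{s+1}\bigr) + \tfrac{\zeta(\kappa,D)}{2L_g}\|g_s\|^2 - \tfrac{\mu}{2}D_s. \]

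Next I would eliminate the unwieldy gradient-norm term. Multiplying the descent estimate by $\zeta(\kappa,D)$ and adding it to the progress estimate cancels $\|g_s\|^2$ and yields the key recursion
\[ \zeta(\kappa,D)\Delta_{s+1} + \tfrac{L_g}{2}D_{s+1} \le \bigl(\zeta(\kappa,D)-1\bigr)\Delta_s + \tfrac{L_g-\mu}{2}D_s. \]
This identifies the natural Lyapunov potential $V_s := \zeta(\kappa,D)\Delta_s + \tfrac{L_g}{2}D_s$. I would then show $V_{s+1}\le(1-\epsilon)V_s$, which is exactly where the definition $\epsilon=\min\{1/\zeta(\kappa,D),\,\mu/L_g\}$ enters: the bound $\epsilon\le 1/\zeta(\kappa,D)$ guarantees $\zeta(\kappa,D)-1\le(1-\epsilon)\zeta(\kappa,D)$, so the coefficient of $\Delta_s$ is dominated, while $\epsilon\le\mu/L_g$ guarantees $L_g-\mu\le(1-\epsilon)L_g$, so the coefficient of $D_s$ is dominated; since $\Delta_s,D_s\ge 0$, the contraction follows termwise. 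Iterating gives $V_t\le(1-\epsilon)^{t-1}V_1$, and extracting $\Delta_t$ (using $\zeta(\kappa,D)\Delta_t\le V_t$, or $\Delta_t\le\tfrac{L_g}{2}D_t\le V_t$ via smoothness evaluated at the minimizer) together with the initial bounds $\Delta_1\le\tfrac{L_g}{2}D_1\le\tfrac{L_g D^2}{2}$ and $D_1\le D^2$ produces the claimed geometric rate.

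The main obstacle is conceptual rather than computational: unlike the Euclidean smooth and strongly convex case, the curvature factor $\zeta(\kappa,D)>1$ obstructs a \emph{pure} distance contraction $D_{s+1}\le(1-\mu/L_g)D_s$, because after substituting $\|g_s\|^2\le 2L_g\Delta_s$ one is left with a residual term $\tfrac{2(\zeta(\kappa,D)-1)}{L_g}\Delta_s$ of positive sign. This is precisely why one must track the \emph{combined} potential $V_s$ rather than distance alone, and the argument hinges on verifying that both coefficients of the recursion are simultaneously dominated by the single factor $1-\epsilon$ --- hence the appearance of the minimum of the two curvature- and conditioning-dependent quantities in $\epsilon$. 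The remaining delicacy is purely bookkeeping: accounting for the initial potential $V_1$ and the iterate indexing carefully so that the exponent and the constant line up with the stated form of the bound.
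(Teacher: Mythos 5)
Your proposal follows the paper's own proof essentially line for line: the same two one-step inequalities, the same $\zeta(\kappa,D)$-weighted combination to cancel $\|g_s\|^2$, and the same role for $\epsilon=\min\{1/\zeta(\kappa,D),\,\mu/L_g\}$. Your Lyapunov contraction $V_{s+1}\le(1-\epsilon)V_s$ is exactly the paper's step ``multiply \eqref{eq:smooth+strong_telescoping_element} by $(1-\epsilon)^{-(s-1)}$ and sum,'' just phrased multiplicatively, and your closing remark about why a pure distance contraction $D_{s+1}\le(1-\mu/L_g)D_s$ fails under curvature is correct.

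There is, however, one concrete slip in your endgame, and it is not just cosmetic. Abbreviate $\zeta=\zeta(\kappa,D)$, and write $V_s=\zeta\Delta_s+\tfrac{L_g}{2}D_s$ and $W_s=(\zeta-1)\Delta_s+\tfrac{L_g-\mu}{2}D_s$, so that the recursion is $V_{s+1}\le W_s\le(1-\epsilon)V_s$. Your plan iterates the contraction all the way to $V_1$ and then bounds $V_1\le(\zeta+1)\tfrac{L_gD^2}{2}$, which yields
\[ \Delta_t \;\le\; \frac{V_t}{\zeta} \;\le\; (1-\epsilon)^{t-1}\,\frac{\zeta+1}{\zeta}\cdot\frac{L_gD^2}{2}. \]
This implies the stated bound only when $(1-\epsilon)(\zeta+1)\le\zeta$, i.e.\ $\epsilon\ge 1/(\zeta+1)$; since $\epsilon=\min\{1/\zeta,\mu/L_g\}$, the condition fails whenever $\mu/L_g<1/(\zeta+1)$ --- already in the flat case $\zeta=1$ with $\mu/L_g<1/2$ your constant is too large by a factor approaching $2$, so the theorem as stated is not proved. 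The repair is exactly the paper's bookkeeping: apply the contraction only $t-2$ times and keep the first step un-contracted, i.e.\ $V_t\le(1-\epsilon)^{t-2}V_2\le(1-\epsilon)^{t-2}W_1$, and then bound $W_1\le(\zeta-1)\tfrac{L_gD^2}{2}+\tfrac{L_g}{2}D^2=\zeta\,\tfrac{L_gD^2}{2}$ using $\Delta_1\le\tfrac{L_gD^2}{2}$ and $D_1\le D^2$. Then $\zeta\Delta_t\le V_t\le(1-\epsilon)^{t-2}\zeta\,\tfrac{L_gD^2}{2}$, and the factor $\zeta$ cancels to give precisely $f(x_t)-f(x^*)\le(1-\epsilon)^{t-2}\tfrac{L_gD^2}{2}$. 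The point is that $W_1$, not $V_1$, is the right quantity to bound at the start: its coefficients $(\zeta-1,\tfrac{L_g-\mu}{2})$ sum to at most $\zeta\tfrac{L_g}{2}$, whereas $V_1$'s sum to $(\zeta+1)\tfrac{L_g}{2}$.
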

\begin{proof}
  As before we use $\Delta_s = f(x_s) - f(x^*)$. Observe that with $\eta = \frac{1}{L_g}$ we have descent:
	\begin{align} \label{eq:smooth+strong_s+1_minus_s}
	\Delta_{s+1} - \Delta_s \leq \langle g_s, \Exp_{x_s}^{-1}(x_{s+1}) \rangle + \frac{L_g}{2} d^2(x_{s+1},x_s) = - \frac{\|g_s\|^2}{2L_g}.
	\end{align}
	On the other hand, by the strong convexity of $f$ and Corollary \ref{th:distance_corollary} we obtain the bounds
	\begin{align} \label{eq:smooth+strong_s+1_minus_star}
		\Delta_s &\le \langle -g_s, \Exp_{x_s}^{-1}(x^*)\rangle - \frac{\mu}{2} d^2(x_t,x^*)\\
			& \le \frac{L_g-\mu}{2}d^2(x_{s},x^*) - \frac{L_g}{2} d^2(x_{s+1},x^*) + \frac{\zeta(\kappa,D)\|g_s\|^2}{2L_g}.
	\end{align}
	Multiply (\ref{eq:smooth+strong_s+1_minus_s}) by $\zeta(\kappa,D)$ and add to (\ref{eq:smooth+strong_s+1_minus_star}) to obtain
	\begin{equation} \label{eq:smooth+strong_telescoping_element}
		\zeta(\kappa,D)\Delta_{s+1} - (\zeta(\kappa,D)-1)\Delta_s \le \frac{L_g-\mu}{2}d^2(x_{s},x^*) - \frac{L_g}{2}d^2(x_{s+1},x^*)
	\end{equation}
	Let $\epsilon = \min\{\frac{1}{\zeta(\kappa,D)}, \frac{\mu}{L_g}\}$, multiply (\ref{eq:smooth+strong_telescoping_element}) by $(1-\epsilon)^{-(s-1)}$ and sum over $s$ from $1$ to $t-1$, we get
	\begin{align}
		\zeta(\kappa,D)(1-\epsilon)^{-(t-2)}\Delta_t \le (\zeta(\kappa,D)-1)\Delta_1 + \frac{L_g-\mu}{2}d^2(x_{1},x^*).
	\end{align}
	Observe that since $\Delta_1\le\frac{L_g D^2}{2}$, it follows that $ \Delta_t \le \frac{(1-\epsilon)^{t-2}L_gD^2}{2}$, as desired.
\end{proof}

It must be emphasized that the proofs of Theorems \ref{th:smoothcvx_rate}, \ref{th:stosmoothcvx_rate}, and \ref{th:smoothstronglycvx_rate} contain some additional difficulties beyond their Euclidean counterparts. In particular, the term $\Delta_{s}$ does not cancel nicely due to the presence of the curvature term $\zeta(\kappa,D)$, which necessitates use of a different Lyapunov function to ensure convergence. Consequently, the stochastic gradient algorithm in Theorem \ref{th:stosmoothcvx_rate} requires some unusual looking averaging scheme. In Theorem \ref{th:smoothstronglycvx_rate}, since the distance between iterates and the minimizer is shrinking, better dependency on $\kappa$ may also be possible if one replaces $\zeta(\kappa,D)$ by a tighter constant.

\section{Experiments}

To empirically validate our results, we compare the performance of a stochastic gradient algorithm with a full gradient descent algorithm on the matrix Karcher mean problem. Averaging PSD matrices have applications in averaging data of anisotropic symmetric positive-definite tensors, such as in diffusion tensor imaging \citep{pennec2006riemannian,fletcher2007riemannian} and elasticity theory \citep{cowin1997averaging}. The computation and properties of various notions of geometric means have been studied by many (e.g. \cite{moakher2005differential,bini2013computing,sra2015conic}). Specifically, the Karcher mean of a set of $N$ symmetric positive definite matrices $\{A_i\}_{i=1}^N$ is defined as the PSD matrix that minimizes the sum of squared distance induced by the Riemannian metric:
	\[ d(X,Y) = \|\log(X^{-1/2}YX^{-1/2})\|_F \]
The loss function
	\[ f(X;\{A_i\}_{i=1}^N) = \sum_{i=1}^N \|\log(X^{-1/2}A_iX^{-1/2})\|_F^2 \]
is known to be nonconvex in Euclidean space but geometrically $2N$-strongly convex
, enabling the use of geometrically convex optimization algorithms. The full gradient update step is 
	\[ X_{s+1} = X_s^{1/2}\exp\left(-\eta_s\sum_{i=1}^N \log(X_s^{1/2}A_i^{-1}X_s^{1/2})\right)X_s^{1/2} \]
For stochastic gradient update, we set
	\[ X_{s+1} = X_s^{1/2}\exp\left(-\eta_sN\log(X_s^{1/2}A_{i(s)}^{-1}X_s^{1/2})\right)X_s^{1/2} \]
where each index $i(s)$ is drawn uniformly at random from $\{1,\dots,N\}$. The step-sizes $\eta_s$ for gradient descent and stochastic gradient method have to be chosen according to the smoothness constant or the strongly-convex constant of the loss function. Unfortunately, unlike the Euclidean square loss, there is no cheap way to compute the smoothness constant exactly. In \citep{bini2013computing} the authors proposed an adaptive procedure to estimate the optimal step-size. Empirically, however, we observe that an $L_g$ estimate of $5N$ always guarantees convergence.
We compare the performance of three algorithms that can be applied to this problem:
\begin{itemize}
	\item Gradient descent (GD) with $\eta_s=\frac{1}{5N}$ set according to the estimate of the smoothness constant (Theorem \ref{th:smoothstronglycvx_rate}).
	\item Stochastic gradient method for smooth functions (SGD-sm) with $\eta_s=\frac{1}{N(s+1)}$ set according to the estimates of the smoothness constant, domain diameter and gradient variance (Theorem \ref{th:stosmoothcvx_rate}).
	\item Stochastic subgradient method for strongly convex functions (SGD-st) with $\eta_s=\frac{1}{N(s+1)}$ set according to the $2N$-strong convexity of the loss function (Theorem \ref{th:stostronglycvx_rate}).
\end{itemize}
 Our data are $100\times 100$ random PSD matrices generated using the Matrix Mean Toolbox \citep{bini2013computing}. All matrices are explicitly normalized so that their norms all equal $1$. We compare the algorithms on four datasets with $N\in\{10^2,10^3\}$ matrices to average and the condition number $Q$ of each matrix being either $10^2$ or $10^8$. For all experiments we initialize $X$ using the arithmetic mean of the dataset. Figure \ref{fig:karcher_mean} shows $f(X) - f(X^*)$ as a function of number of passes through the dataset. We observe that the full gradient algorithm with fixed step-size achieves linear convergence, whereas the stochastic gradient algorithms have a sublinear convergence rate, but is much faster during the initial steps.

\begin{figure}[htp] \label{fig:karcher_mean}
	\vskip 0.2in
	\begin{center}
		\centerline{\includegraphics[width=0.9\columnwidth]{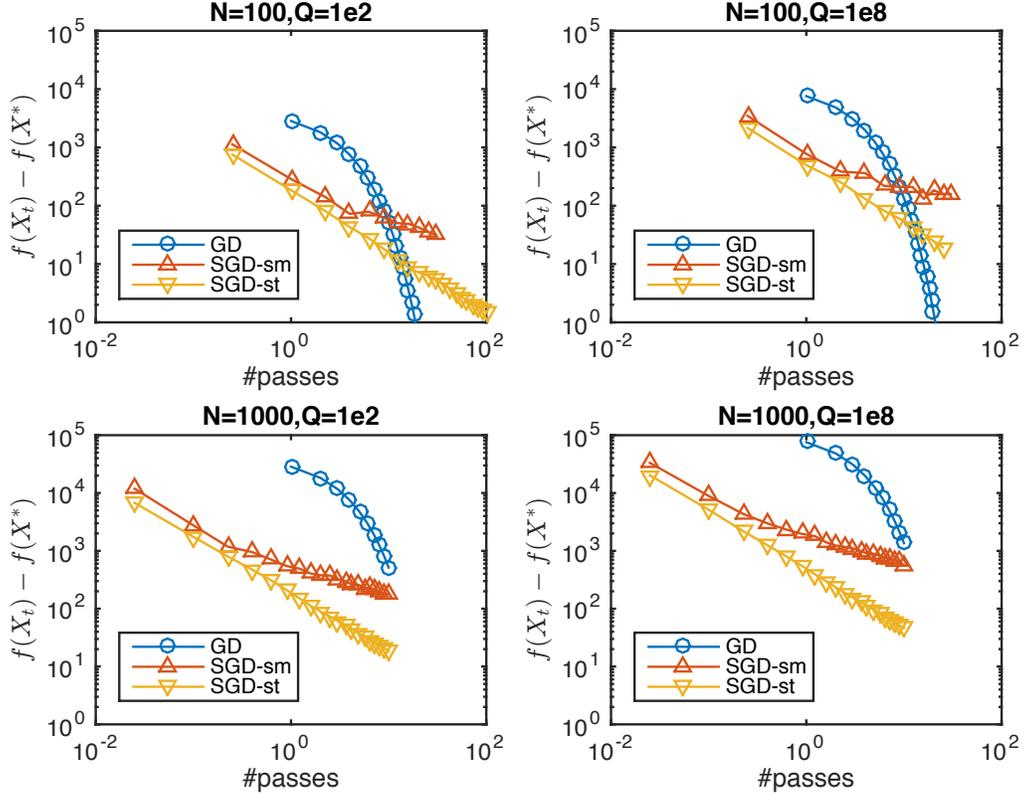}}
		\caption{Comparing gradient descent and stochastic gradient methods in matrix Karcher mean problems. Shown are loglog plots of three algorithms on different datasets. GD: gradient descent (Theorem \ref{th:smoothstronglycvx_rate}); SGD-sm: stochastic gradient method for smooth functions (Theorem \ref{th:stosmoothcvx_rate}); SGD-st: stochastic (sub)gradient method for strongly convex functions (Theorem \ref{th:stostronglycvx_rate}). We varied two parameters: size of the dataset $n\in\{10^2,10^3\}$ and conditional number $Q\in\{10^2,10^8\}$. Data generating process, initialization and step-size are described in the main text. It is validated from the figures that GD converges at a linear rate, SGD-sm converges asymptotically at the $O(1/\sqrt{t})$ rate, and SGD-st converges at the $O(1/t)$ rate.}
		\label{icml-historical}
	\end{center}
	\vskip -0.2in
\end{figure}

\section{Discussion}
In this paper, we make contributions to the understanding of geodesically convex optimization on Hadamard manifolds. Our contributions are twofold: first, we develop a user-friendly trigonometric distance bound for Alexandrov space with curvature bounded below, which includes several commonly known Riemannian manifolds as special cases; second, we prove iteration complexity upper bounds for several first-order algorithms on Hadamard manifolds, which are the first such analyses up to the best of our knowledge. We believe that our analysis is a small step, yet in the right direction, towards understanding and realizing the power of optimization in nonlinear spaces.

\subsection{Future Directions}
Many questions are not yet answered. We summarize some important ones in the following:
\begin{itemize}
	\item A long-time question is whether the famous Nesterov's accelerated gradient descent algorithms have nonlinear space counterparts. The analysis of Nesterov's algorithms typically relies on a proximal gradient projection interpretation. In nonlinear space, we have not been able to find an analogy to such a projection. Further study is needed to see if similar analysis can be developed, or a different approach is required, or Nesterov's algorithms have no nonlinear space counterparts.
	\item Another interesting direction is variance reduced stochastic gradient methods for geodesically convex functions. For smooth and convex optimization in Euclidean space, these methods have recently drawn great interests and enjoyed remarkable empirical success. We hypothesize that similar algorithms can achieve faster convergence over naive incremental gradient methods on Hadamard manifolds.
	\item Finally, since in applications it is often favorable to replace exponential mapping with computationally cheap retractions, it is important to understand the effect of this approximation on convergence rate. Analyzing this effect is of both theoretical and practical interests.
\end{itemize}


\bibliography{gopt,gopt_more}

\appendix
	\section{Proof of Lemma 1}

\begin{lemma} \label{th:lemma_super_exponential}
	Let $$g(b,c) = \cosh \sqrt{\frac{c}{\tanh(c)}b^2 + c^2 - 2bc\cos(A)}$$
	then $$\frac{\partial^2}{\partial b^2}g(b,c) \geq  g(b,c), \quad b,c \geq 0$$
\end{lemma}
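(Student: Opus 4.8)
The plan is to reduce this second-order differential inequality to a single scalar comparison, and then dispose of that comparison by a sign/series argument. Write $\lambda := \tfrac{c}{\tanh c}$ (so $\lambda\ge 1$ for $c\ge 0$, with $x\mapsto x\coth x$ the relevant monotone profile), set $u(b) := \lambda b^2 + c^2 - 2bc\cos A$, and let $r := \sqrt{u}$ so that $g=\cosh r$ and $A$ is treated as a fixed parameter. Differentiating $r^2=u$ once and twice in $b$ gives the clean identities $r\,r' = \lambda b - c\cos A$ and $(r')^2 + r\,r'' = \lambda$. Writing $p := (r')^2$, the chain rule then yields $g'' = p\cosh r + (\lambda-p)\tfrac{\sinh r}{r}$, so that
\[ g'' - g = (p-1)\cosh r + (\lambda-p)\tfrac{\sinh r}{r}. \]
First I would record the algebraic identity obtained by eliminating $b$: since $u\,p=(\lambda b-c\cos A)^2$, a one-line computation gives $\lambda u-(\lambda b-c\cos A)^2 = c^2(\lambda-\cos^2 A)$, i.e. $r^2(\lambda-p)=c^2(\lambda-\cos^2 A)=:C_0\ge 0$. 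Substituting $\lambda-p=C_0/r^2$ converts the target $g''\ge g$ into the scalar inequality
\[ (\lambda-1)\cosh r \ \ge\ \frac{C_0}{r^2}\Bigl(\cosh r-\frac{\sinh r}{r}\Bigr)\qquad (\star), \]
in which $\cosh r-\tfrac{\sinh r}{r}\ge 0$ for all $r\ge 0$.

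Next I would split on the size of $r$ relative to $c$. Because $p\ge 0$ gives the crude bound $C_0\le \lambda r^2$, in the regime $r\le c$ inequality $(\star)$ follows from $(\lambda-1)\cosh r\ge \lambda\bigl(\cosh r-\tfrac{\sinh r}{r}\bigr)$, which rearranges exactly to $\lambda\ge r\coth r$; this holds since $x\mapsto x\coth x$ is increasing (its derivative is $\tfrac{\frac12\sinh 2x-x}{\sinh^2 x}\ge 0$) and $\lambda=c\coth c$ with $r\le c$. In the complementary regime $r\ge c$ the crude bound is too lossy, so there I instead use $C_0=c^2(\lambda-\cos^2 A)\le c^2\lambda$, which reduces $(\star)$ to the $A$-free, two-variable inequality
\[ (\lambda-1)\,r^2\cosh r \ \ge\ c^2\lambda\Bigl(\cosh r-\frac{\sinh r}{r}\Bigr),\qquad r\ge c\qquad (\star\star). \]

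The hard part will be $(\star\star)$, and I expect it to be the genuine crux. The plan is a variation-diminishing (Descartes-type) argument on power series. Expanding both sides, the difference is $G(c,r)=\sum_{n\ge 1}a_n r^{2n}$ with $a_n=(\lambda-1)\tfrac{1}{(2n-2)!}-c^2\lambda\tfrac{1}{(2n-1)!\,(2n+1)}$; the sign of $a_n$ equals the sign of $(\lambda-1)(4n^2-1)-c^2\lambda$, which is increasing in $n$, so the sequence $(a_n)$ has \emph{exactly one} sign change, from negative to positive. A direct check gives $G(c,c)=0$, hence the entire function $P(x):=\sum_{n\ge 1}a_n x^{\,n-1}=G(c,\sqrt{x})/x$ vanishes at $x=c^2$. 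Since its coefficient sequence changes sign once, a Descartes-type bound shows $P$ has at most one positive zero; combined with $P(0)=a_1<0$ and $P(x)\to+\infty$, this forces $P\ge 0$ on $[c^2,\infty)$, which is precisely $(\star\star)$. Combining the two regimes establishes $(\star)$, hence $g''\ge g$ for all $b,c\ge 0$.

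The delicacy of $(\star\star)$ is that it is \emph{tight} at $r=c$ (equality), so any successful argument must exploit both this boundary equality and the fixed single sign change of the coefficients rather than a blunt term-by-term domination. As a sanity check and intuition, for small $c$ only $a_1$ is negative and one gets $(\star\star)$ immediately from the coefficient-shifting estimate $G(c,r)\ge \tfrac{r^2}{c^2}\,G(c,c)=0$ valid when every $a_n$ with $n\ge 2$ is nonnegative; the sign-change/Descartes argument above is the uniform version that also covers the large-$c$ range where several leading coefficients turn negative.
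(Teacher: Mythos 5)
Your proof is correct, but after the shared setup it diverges from the paper's argument in an essential way. Both you and the paper compute $g''$ identically and reduce the lemma to the \emph{same} scalar inequality: your $(\star)$ is exactly the paper's condition $h_1(u)=\frac{u^3}{u-\tanh u}\ge \frac{c^2}{x}\bigl(x+\sin^2 A\bigr)$ (with $x=\lambda-1$, $u=r$, so that the right-hand side is $C_0/(\lambda-1)$); multiplying through by $\cosh u/u^3$ turns one into the other. The difference is how this inequality is proved. The paper shows $h_1$ is monotonically increasing on $(0,\infty)$, so the inequality need only be checked at the smallest attainable value $u_{\min}$, where $u_{\min}^2=C_0/\lambda$; there it collapses to $\lambda\ge u_{\min}\coth(u_{\min})$, which follows from $u_{\min}\le c$ and monotonicity of $x\coth x$ --- one lemma, all regimes at once. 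You instead split at $r=c$: for $r\le c$ the crude bound $C_0\le\lambda r^2$ plus monotonicity of $x\coth x$ suffices, while for $r\ge c$ that bound is too lossy (it would require $\lambda\ge r\coth r$, false there), and you switch to $C_0\le c^2\lambda$ and the power-series argument: monotone coefficient signs with one sign change, the exact identity $G(c,c)=0$, and a Descartes--Laguerre rule. Your route is longer and needs heavier machinery in the second case, but it is sound and genuinely different: it replaces the paper's monotonicity of $u^3/(u-\tanh u)$ (which is nontrivial, and only sketchily justified in the paper) by the elementary derivative of $x\coth x$ plus a sign-change argument, and it isolates the fact that the inequality is tight exactly at $r=c$ (i.e.\ at $b=0$, $\cos A=0$), a point the paper's chain of reductions passes through but never makes explicit.

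Two details you should tighten. First, you assert $P(0)=a_1<0$ without proof. It is true, but you need not verify it by hand: if $a_1\ge 0$, the monotone sign pattern would force all $a_n\ge 0$, and since for $c>0$ they cannot all vanish (the quantity $(\lambda-1)(4n^2-1)-c^2\lambda$ cannot be zero for every $n$), this would give $P(c^2)>0$, contradicting $G(c,c)=0$; hence $a_1<0$ is automatic. Second, dispose of the degenerate case $c=0$ separately (there $\lambda=1$, $g=\cosh b$, and the claim holds with equality), and state or cite precisely the Descartes-type rule you invoke --- for a real power series convergent on $(0,\infty)$, the number of positive zeros counted with multiplicity is at most the number of sign changes of the coefficient sequence --- since it is the load-bearing step of your second case.
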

\begin{proof}
	If $c=0$, $g(b,c) = \cosh(b) = \frac{\partial^2}{\partial b^2}g(b,c)$. Now we focus on the case when $c>0$. If $c>0$, Let $u = \sqrt{(1+x)b^2 + c^2 - 2bc\cos(A) }$ where $x = x(c)$. We have
	\[u^2 = (1+x)b^2 - 2bc\cos(A) + c^2 \geq \frac{c^2(x+\sin^2A)}{1+x} = u^2_{\min} > 0\]
	
	$$\frac{\partial^2}{\partial b^2}g(b,c) = \left(1+x-c^2\left(x+\sin^2A\right)\frac{1}{u^2} \right) \cosh(u) + c^2\left(x+\sin^2A\right) \frac{\sinh(u)}{u^3}$$
	Since $g(b,c)=\cosh(u)>0$, it suffices to prove
	\[\frac{\frac{\partial^2}{\partial b^2}g(b,c)}{g(b,c)} - 1 = x\left(1 - \frac{c^2}{x}\left(x+\sin^2A\right)\frac{1}{u^2} + \frac{c^2}{x}\left(x+\sin^2A\right) \frac{\tanh(u)}{u^3}\right) \geq 0\]
	so it suffices to prove
	\[h_1(u) = \frac{u^3}{u-\tanh(u)}\geq \frac{c^2}{x}(x+\sin^2A)\]
	Solving for $h'_1(u) = 0$, we get $u=0$. Since $\lim_{u\to 0_+} h_1(u) = 0$ and $h_1(u)>0, \forall u>0$, $h_1(u)$ is monotonically increasing on $u>0$. Thus $h_1(u) \geq h_1(u_{\min}), \forall u>0$.
	Note that $\frac{c^2}{x}(x+\sin^2A) = \frac{1+x}{x}u^2_{\min}$, thus it suffices to prove
	\[h_1(u_{\min}) = \frac{u^3_{\min}}{u_{\min}-\tanh(u_{\min})} \geq \frac{(1+x)u^2_{\min}}{x}\]
	or equivalently
	\[\frac{\tanh(u_{\min})}{u_{\min}} \geq \frac{1}{1+x}\]
	Now fix $c$ and notice that $\frac{\tanh(u_{\min})}{u_{\min}}$ as a function of $\sin^2A$ is monotonically decreasing. Therefore its minimum is obtained at $\sin^2A=1$, where $u^2_{\min} = u^2_* = c^2$, i.e. $u_* = c$. So it only remains to show
	\[\frac{\tanh(u_*)}{u_*} = \frac{\tanh(c)}{c} \geq \frac{1}{1+x}, \forall c>0\]
	or equivalently
	\[  1+x \geq \frac{c}{\tanh(c)}, \forall c>0 \]
	which is true by our definition of $g$.
\end{proof}
\begin{lemma} \label{th:lemma_differential_inequality}
	Suppose $h(x)$ is twice differentiable on $[r, +\infty)$ with three further assumptions:
	\begin{enumerate}
		\item $h(r) \leq 0$,
		\item $h'(r) \leq 0$,
		\item $h''(x) \leq h(x), \forall x \in [r, +\infty)$,
	\end{enumerate}
	then $h(x) \leq 0, \forall x \in [r, +\infty)$
\end{lemma}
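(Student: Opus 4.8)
The plan is to treat the hypothesis $h'' \le h$ as a second-order differential inequality and reduce it to two successive first-order inequalities by an integrating-factor argument. The operator identity $h'' - h = \bigl(\tfrac{d}{dx}-1\bigr)\bigl(\tfrac{d}{dx}+1\bigr)h$ motivates introducing the auxiliary function $\psi(x) := h'(x) + h(x)$, so that the assumption $h''(x)\le h(x)$ becomes exactly the first-order inequality $\psi'(x) - \psi(x) = h''(x) - h(x) \le 0$.

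First I would analyze $\psi$. Multiplying $\psi' - \psi \le 0$ by the strictly positive factor $e^{-x}$ gives $\bigl(e^{-x}\psi(x)\bigr)' = e^{-x}\bigl(\psi'(x)-\psi(x)\bigr) \le 0$, so $e^{-x}\psi(x)$ is non-increasing on $[r,\infty)$. The two boundary hypotheses combine favorably here: $\psi(r) = h'(r) + h(r) \le 0$, hence $e^{-r}\psi(r)\le 0$, and monotonicity yields $e^{-x}\psi(x) \le e^{-r}\psi(r) \le 0$. Since $e^{-x}>0$, this is equivalent to $h'(x) + h(x) \le 0$ for all $x\ge r$.

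Second I would bootstrap from $h'+h\le 0$ to the conclusion, now using the integrating factor $e^{x}$. Because $\bigl(e^{x}h(x)\bigr)' = e^{x}\bigl(h'(x)+h(x)\bigr) \le 0$, the quantity $e^{x}h(x)$ is non-increasing on $[r,\infty)$, and from $h(r)\le 0$ we obtain $e^{x}h(x) \le e^{r}h(r) \le 0$, whence $h(x)\le 0$ throughout $[r,\infty)$, as desired.

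The only genuine subtlety — and the step I would be most careful about — is the \emph{order} of the factorization. If instead I worked through $\phi := h' - h$ (factoring as $\bigl(\tfrac{d}{dx}+1\bigr)\bigl(\tfrac{d}{dx}-1\bigr)h$), I would need to control $\phi(r) = h'(r) - h(r)$, whose sign is indeterminate: the hypothesis $h'(r)\le 0$ pushes it one way while $-h(r)\ge 0$ pushes it the other. The ordering chosen above is precisely the one for which both supplied boundary conditions propagate the correct sign, so that each monotonicity argument closes on its own without any extra assumption.
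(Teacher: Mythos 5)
Your proof is correct, and it takes a genuinely different route from the paper's. You factor the differential inequality as $h''-h = \bigl(\tfrac{d}{dx}-1\bigr)\bigl(\tfrac{d}{dx}+1\bigr)h$ and run two successive integrating-factor arguments: first $e^{-x}\psi$ with $\psi = h'+h$ to get $h'+h\le 0$, then $e^{x}h$ to conclude $h\le 0$; each step needs exactly one of the boundary hypotheses (both enter through $\psi(r)\le 0$, and $h(r)\le 0$ again at the end), and your remark about choosing the order of the factors is exactly the right point to flag. The paper instead argues by contradiction: it suffices to show $h'\le 0$ on $[r,\infty)$, so suppose $h'$ becomes positive; the paper extracts an interval $(s,t]$ with $h'(s)=0$ and $h'>0$ on $(s,t]$, chains the inequalities $h''(x)\le h(x)\le \int_r^x h'(u)\,du \le (t-s)h'(x)$, and invokes Gr\"onwall's inequality to force $h'(t)\le h'(s)e^{(t-s)^2}=0$, a contradiction. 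Your approach buys simplicity and robustness: it is direct, uses nothing beyond the mean value theorem, and avoids the delicate setup of the contradiction (in particular the paper's claim that $h'$ is \emph{monotonically increasing} on $(s,t]$, which is needed for the step $\int_s^x h'(u)\,du\le (x-s)h'(x)$ but is not justified as stated, and the appeal to Gr\"onwall). It also generalizes immediately to inequalities of the form $h''\le \lambda^2 h$ by factoring $\bigl(\tfrac{d}{dx}-\lambda\bigr)\bigl(\tfrac{d}{dx}+\lambda\bigr)$. What the paper's route offers in exchange is the explicit intermediate fact that $h$ is non-increasing ($h'\le 0$), which your argument does not yield, though the lemma as stated does not require it.
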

\begin{proof}
	It suffices to prove $h'(x) \leq 0, \forall x \in [r, +\infty)$. We prove this claim by contradiction.
	
	Suppose the claim doesn't hold, then there exist some $t>s\geq r$ so that $h'(x) \leq 0$ for any $x$ in $[r,s]$, $h'(s) = 0$ and $h'(x) > 0$ is monotonically increasing in $(s,t]$. It follows that for any $x \in [s, t]$ we have
	\[h''(x) \leq h(x) \leq \int_r^x h'(u) du \leq \int_s^x h'(u) du \leq (x-s) h'(x) \leq (t-s) h'(x)\]
	Thus by Gr\"onwall's inequality, $$h'(t) \leq h'(s)e^{(t-s)^2} = 0$$
	which leads to a contradiction with our assumption $h'(t) > 0$.
\end{proof}

\begin{lemma} \label{th:canonic_hyperbolic_distance_bound}
	If $a,b,c$ are the sides of a (geodesic) triangle in a hyperbolic space of constant curvature $-1$, and $A$ is the angle between $b$ and $c$, then
	\[a^2 \leq \frac{c}{\tanh(c)} b^2 + c^2 - 2bc\cos(A)\]
\end{lemma}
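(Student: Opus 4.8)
The plan is to reduce the claimed inequality to a one-dimensional second-order differential inequality in the variable $b$, holding $c$ and $A$ fixed, and then to invoke the two auxiliary lemmas already established in the appendix. In the hyperbolic plane of curvature $-1$, the side length $a$ is determined by $b$ (with $c,A$ fixed) through the law of cosines \eqref{eq:7}; writing $F(b) := \cosh b \cosh c - \sinh b \sinh c \cos(A)$, we have $F(b) = \cosh(a(b))$. The key structural observation I would exploit is that differentiating twice in $b$ leaves $F$ unchanged, i.e.\ $F''(b) = F(b)$, since $\cosh$ and $\sinh$ are their own second derivatives.

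Next I would introduce $g(b,c) = \cosh\sqrt{\tfrac{c}{\tanh(c)}b^2 + c^2 - 2bc\cos(A)}$, the right-hand side of the claimed bound wrapped inside $\cosh$, and set $h(b) := F(b) - g(b,c)$. Since $\cosh$ is strictly increasing on $[0,\infty)$ and both $a$ and the square-root argument are nonnegative, it suffices to prove $h(b)\le 0$ for all $b\ge 0$. First I would verify the two boundary conditions at $b=0$ by direct computation: $F(0) = \cosh c = g(0,c)$, so $h(0)=0$; and since the square-root argument equals $c^2$ at $b=0$ with $b$-derivative $-2c\cos(A)$ there, the chain rule gives $g'(0) = -\sinh c\cos(A) = F'(0)$, so $h'(0)=0$.

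The second-order condition is where the earlier work pays off. Combining $F''=F$ with Lemma \ref{th:lemma_super_exponential}, which asserts $\partial_b^2 g(b,c) \ge g(b,c)$ for $b,c\ge 0$, yields
\[
  h''(b) = F(b) - \partial_b^2 g(b,c) \le F(b) - g(b,c) = h(b).
\]
Thus $h$ satisfies $h(0)\le 0$, $h'(0)\le 0$, and $h''\le h$ on $[0,\infty)$, so Lemma \ref{th:lemma_differential_inequality} (applied with $r=0$) delivers $h(b)\le 0$ throughout. Applying the monotone inverse of $\cosh$ then gives $\cosh(a) \le g(b,c)$, hence $a^2 \le \tfrac{c}{\tanh(c)}b^2 + c^2 - 2bc\cos(A)$, as required.

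I expect the genuinely hard analytic content to lie outside this synthesis: it is Lemma \ref{th:lemma_super_exponential}, the super-exponential growth estimate $\partial_b^2 g \ge g$, whose proof requires the careful minimization of the square-root argument and the monotonicity of $\tanh(u)/u$. Granting that lemma and the Gr\"onwall-type comparison of Lemma \ref{th:lemma_differential_inequality}, the remaining obstacle here is merely bookkeeping, namely confirming that the value and first-derivative matching at $b=0$ hold \emph{exactly}, so that the differential-inequality framework applies with no slack in the initial conditions.
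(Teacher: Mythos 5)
Your proposal is correct and follows essentially the same route as the paper's own proof: define $h(b)=\cosh(a(b))-g(b,c)$, use $\partial_b^2\cosh(a(b))=\cosh(a(b))$ together with Lemma \ref{th:lemma_super_exponential} to get $h''\le h$, and conclude via Lemma \ref{th:lemma_differential_inequality} with $r=0$ and the monotonicity of $\cosh$. In fact you supply the detail the paper glosses over as ``easy to verify,'' namely the exact matching of values and first derivatives at $b=0$, and your computations there ($h(0)=0$, $h'(0)=F'(0)-g'(0)=-\sinh c\cos(A)+\sinh c\cos(A)=0$) are accurate.
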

\begin{proof}
	For a fixed but arbitrary $c\geq 0$, define $h_c(x) = f(x,c) - g(x,c)$. By Lemma \ref{th:lemma_super_exponential} it is easy to verify that $h_c(x)$ satisfies the assumptions of Lemma \ref{th:lemma_differential_inequality}. Apply Lemma \ref{th:lemma_differential_inequality} to $h_c$ with $r=0$ to show $h_c \leq 0$ in $[0, +\infty)$. Therefore $f(b,c)\leq g(b,c)$ for any $b, c \geq 0$. Finally use the fact that $\cosh(x)$ is monotonically increasing on $[0, +\infty)$.
\end{proof}

\begin{corollary} \label{th:hyperbolic_distance_bound}
	If $a,b,c$ are the sides of a (geodesic) triangle in a hyperbolic space of constant curvature $\kappa$, and $A$ is the angle between $b$ and $c$, then
	\[a^2 \leq \frac{\sqrt{|\kappa|}c}{\tanh(\sqrt{|\kappa|}c)} b^2 + c^2 - 2bc\cos(A)\]
\end{corollary}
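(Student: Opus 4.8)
The plan is to reduce the general constant-curvature statement to the normalized case $\kappa=-1$ already established in Lemma~\ref{th:canonic_hyperbolic_distance_bound}, via a simple metric-rescaling argument. The key geometric fact is that a hyperbolic space of constant curvature $\kappa<0$ becomes, after scaling all distances by the factor $\sqrt{|\kappa|}$, isometric to the hyperbolic space of constant curvature $-1$; this rescaling multiplies every side length by $\sqrt{|\kappa|}$ while leaving all angles unchanged. Thus the general case should follow from the $\kappa=-1$ case purely by a change of variables.

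Concretely, I would set $a'=\sqrt{|\kappa|}\,a$, $b'=\sqrt{|\kappa|}\,b$, $c'=\sqrt{|\kappa|}\,c$, and observe that $(a',b',c')$ are the side lengths of a geodesic triangle in the curvature-$(-1)$ space, with $A$ still being the angle between the sides of lengths $b'$ and $c'$. The cleanest way to make this precise is to start from the hyperbolic law of cosines in the curvature-$\kappa$ space, namely~\eqref{eq:hyperbolic_law_of_cosines}, and note that under this substitution it reduces exactly to the standard identity~\eqref{eq:7} for the primed lengths. Hence the primed triangle is governed by the same trigonometry as a genuine curvature-$(-1)$ triangle, and Lemma~\ref{th:canonic_hyperbolic_distance_bound} applies to it.

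Applying that lemma to the primed triangle gives
\[
  (a')^2 \le \frac{c'}{\tanh(c')}(b')^2 + (c')^2 - 2b'c'\cos(A).
\]
Substituting back $a'=\sqrt{|\kappa|}\,a$, $b'=\sqrt{|\kappa|}\,b$, $c'=\sqrt{|\kappa|}\,c$, and using $\frac{c'}{\tanh(c')}=\frac{\sqrt{|\kappa|}\,c}{\tanh(\sqrt{|\kappa|}\,c)}$, I see that every term acquires a common factor of $|\kappa|$; dividing the inequality through by $|\kappa|$ yields exactly the claimed bound.

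The argument is essentially bookkeeping, so there is no serious obstacle; the only point deserving care is to justify the scaling step correctly, i.e.\ that multiplying the Riemannian metric by $|\kappa|^{-1}$ scales sectional curvature by $|\kappa|$ and side lengths by $\sqrt{|\kappa|}$ while preserving angles, so that Lemma~\ref{th:canonic_hyperbolic_distance_bound} genuinely applies to the primed triangle. Taking~\eqref{eq:hyperbolic_law_of_cosines} as the primitive trigonometric identity and verifying the change of variables directly sidesteps any need to invoke the geometric rescaling abstractly, and is the route I would write up. As a sanity check one may note the Euclidean limit $\kappa\to0$, where $\frac{\sqrt{|\kappa|}\,c}{\tanh(\sqrt{|\kappa|}\,c)}\to1$ and the bound degenerates to the ordinary law of cosines, though this limit is not required for the hyperbolic statement itself.
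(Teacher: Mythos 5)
Your proposal is correct and takes essentially the same route as the paper: both recognize the curvature-$\kappa$ law of cosines as the curvature-$(-1)$ law of cosines applied to the rescaled side lengths $\sqrt{|\kappa|}\,a$, $\sqrt{|\kappa|}\,b$, $\sqrt{|\kappa|}\,c$, apply Lemma~\ref{th:canonic_hyperbolic_distance_bound} to that rescaled triangle, and divide the resulting inequality by $|\kappa|$.
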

\begin{proof}
	For hyperbolic space of constant curvature $\kappa < 0$, the law of cosines is
	\[ \cosh (\sqrt{|\kappa|} a) = \cosh (\sqrt{|\kappa|} b) \cosh (\sqrt{|\kappa|} c) - \sinh (\sqrt{|\kappa|} b) \sinh (\sqrt{|\kappa|} c) \cos A \]
	which corresponds to the law of cosines of a geodesic triangle in hyperbolic space of curvature $-1$ with side lengths $\sqrt{|\kappa|} a, \sqrt{|\kappa|} b, \sqrt{|\kappa|} c$. Applying Lemma \ref{th:canonic_hyperbolic_distance_bound} we thus get
	\[ |\kappa|a^2 \leq \frac{\sqrt{|\kappa|}c}{\tanh(\sqrt{|\kappa|}c)} |\kappa|b^2 + |\kappa|c^2 - 2|\kappa|bc\cos(A) \]
	and the corollary follows directly.
\end{proof}

\end{document}